% SIAM Article Template
\documentclass[review,hidelinks,onefignum,onetabnum]{siamart251104}

% Information that is shared between the article and the supplement
% (title and author information, macros, packages, etc.) goes into
% ex_shared.tex. If there is no supplement, this file can be included
% directly.

% SIAM Shared Information Template
% This is information that is shared between the main document and any
% supplement. If no supplement is required, then this information can
% be included directly in the main document.

% Packages and macros go here
\usepackage{lipsum}
\usepackage{amsfonts}
\usepackage{epstopdf}
\usepackage{algorithmic}
\ifpdf
  \DeclareGraphicsExtensions{.eps,.pdf,.png,.jpg}
\else
  \DeclareGraphicsExtensions{.eps}
\fi

% Add a serial/Oxford comma by default.

%% ------------------------------------------------------------------
%% Code used in examples, needed to reproduce 
%% ------------------------------------------------------------------
%% Used for \set, used in an example below
\usepackage{amsmath,amssymb,amsfonts}
\usepackage{graphicx,color}
\usepackage[utf8]{inputenc}
\usepackage{enumerate}
\usepackage{mathtools}
\usepackage{cases}
\usepackage{multicol}
\usepackage{tabularx}
\usepackage{adjustbox}
\usepackage{varwidth} 
\usepackage{bm}
\usepackage{tikz,pgfplots}
\usetikzlibrary{calc}
\usetikzlibrary{decorations.markings}
\tikzset{>=latex} % for LaTeX arrow head
\usepackage{array}
\usepackage{hyperref}
\usepackage{cleveref} % must be last

\usepackage{pgfplots}
\pgfplotsset{compat=1.18} % or the latest version you have installed

% Used for creating new theorem and remark environments
% \newtheorem{thm}{Theorem}[section]
% \newtheorem{defn}[thm]{Definition}
% \newtheorem{prop}[thm]{Proposition}
% \newtheorem{cor}[thm]{Corollary}
% \newtheorem{lem}[thm]{Lemma}
% \newtheorem{conj}[thm]{Conjecture}
% \newtheorem{assumption}[thm]{Assumption}
% \newtheorem{example}[thm]{Example}
% \newtheorem{remark}[thm]{Remark}
% \newtheorem{nota}[thm]{Notation}

\newtheorem{thm}{Theorem}[section]
\newtheorem{lem}[thm]{Lemma}
\newtheorem{defn}[thm]{Definition}
\newtheorem{prop}[thm]{Proposition}
\newtheorem{cor}[thm]{Corollary}

\newtheorem{assumption}[thm]{Assumption}

\newtheorem{remark}[thm]{Remark}

\newcommand{\A}{\mathcal{A}}

\newcommand{\BB}{\mathcal{B}}
\newcommand{\CC}{\mathcal{C}}
\newcommand{\G}{\mathcal{G}}
\renewcommand{\Re}{\mathrm{Re}}

%% For creating math operators
% \usepackage{amsopn}
% \DeclareMathOperator{\Range}{Range}

% \usepackage{amsopn}
% \DeclareMathOperator{\diag}{diag}

% Sets running headers as well as PDF title and authors
\headers{Well-posedness of 1D Hyperbolic control systems}{Bouchra Elghazi, Birgit Jacob, and Hans Zwart}

% Title. If the supplement option is on, then "Supplementary Material"
% is automatically inserted before the title.
\title{Well-posedness and controllability of hyperbolic boundary control systems on a one-dimensional spatial domain \thanks{Submitted to the editors on December 30 , 2025.
\funding{This work was supported and funded by the European Union (Horizon Europe MSCA project ModConFlex, grant number 101073558) and by the Deutsche Forschungsgemeinschaft (DFG, German Research Foundation) Project-ID 531152215 -- CRC 1701.}}}

% Authors: full names plus addresses.
\author{Bouchra Elghazi
\thanks{University of Wuppertal, School of Mathematics and Natural Sciences Gaußstraße 20, 42119 Wuppertal, Germany, (\email{elghazi@uni-wuppertal.de}, \email{bjacob@uni-wuppertal.de}).}
\and Birgit Jacob\footnotemark[2]
\and Hans Zwart
\thanks{Department of Applied Mathematics, University of Twente, P.O. Box 217, 7500 AE, Enschede, The Netherlands and Department of Mechanical Engineering, Eindhoven University of Technology, P.O. Box 513, 5600 MB, Eindhoven, The Netherlands, \email{h.j.zwart@utwente.nl}.}
}

%%% Local Variables: 
%%% mode:latex
%%% TeX-master: "ex_article"
%%% End: 

% Optional PDF information
% Optional: Set up PDF title and authors
\ifpdf
\hypersetup{ pdftitle={well-posedness and controllability of hyperbolic boundary control systems on a 1D spatial domain} }
\fi

%---------------------------------------------------------
%% MAIN Document
%% ------------------------------------------------------------------
\begin{document}

\maketitle

%% ------------------------------------------------------------------
%% ABSTRACT
%% ------------------------------------------------------------------

\begin{abstract}
    The aim of this paper is to investigate the well-posedness of a class of boundary control and observation systems on a one dimensional spatial domain. We derive a necessary and sufficient condition characterizing the well-posedness of these systems. Furthermore, we show that the well-posedness and full control and observation implies exact controllability and exact observability. The theoretical results are illustrated using Euler-Bernoulli beam models.
\end{abstract}

\begin{keywords}
Boundary control and observation systems, well-posedness, exact controllability, exact observability, Euler-Bernoulli beam models
\end{keywords}

\begin{MSCcodes} 47D06, 49K40, 35F05, 93B52 

\end{MSCcodes}

%% ------------------------------------------------------------------
%% END HEADER
%% ------------------------------------------------------------------
%-------------------------------------------------------------------------
%Introduction
%------------------------------------------------------------------------
\section{Introduction}
Many physical structures rarely remain rigid when subjected to force. In fact, the modelling and analysis of flexible structures like beams, or strings play a crucial role in many engineering applications such as in robotics, aerospace, and high precision machines. These structures are typically modelled by partial differential equations (PDEs), particularly, linear time-invariant PDEs have garnered significant attention. In many applications, these PDEs are controlled and observed only at their boundaries. They are naturally modelled within the framework of boundary control and observation systems. This concept was initiated by Fattorini in the 1960s \cite{Fattorini:68}, establishing a starting point for studying linear time-invariant systems governed by PDEs with boundary control. Later researchers such as Salamon \cite{Salamon:87,salamon_realization:88}, Weiss \cite{weiss:89,Weis:89,Wei:94,weiss1989representation,weiss1994transfer} and Curtain \cite{curtain1997salamon} developed Fattorini’s early work to the modern theory of well-posedness, admissibility and regular linear systems. 

The concept of well-posedness is fundamental in the study of boundary control and observation, as it forms the basis for further control and stability analysis. Over the years, the study of well-posedness of linear time-invariant systems has attracted considerable attention, leading to a significant amount of literature. Seminal contributions in this area include the works by Lasiecka and Triggiani \cite{LasieckaTriggiani2000a,LasieckaTriggiani2000b} and by Staffans \cite{Sta:05}. For further reading, we also refer to \cite{MorrisCheng:99,MorrisCheng:03,CurtainWeiss1989,CurtZwa:20,JacZw:12,TucsnakWeiss:14,Weiss2001}.
Informally speaking, well-posedness refers to the property that for every initial condition in the state space and any input function in a specified space of functions, the system has a unique state trajectory and a unique output function. Moreover, the output must belong to a specified space of functions, and both the state and the output must depend continuously on the initial state and on the input. Equivalently, it concerns the well-definedness and boundedness of the four mappings from initial state to final state, initial state to output, input to final state, and input to output (represented by \emph{the transfer function}). For boundary control and observation systems that are impedance passive, there exists a simple way of characterizing the well-posedness. In fact, if the system is internally well-posed, then the well-posedness is equivalent to the boundedness of the transfer function on a vertical line in the open right half plane. Notable examples within this class are port-Hamiltonian systems which provide a powerful framework for modelling as it views physical systems from an energy-based perspective \cite{Schaft:06,SchJelt:14,SchMas:02}. 

The primary focus of this article is to investigate the well-posedness of the following class of linear time-invariant systems on a one-dimensional spatial domain
\begin{equation}\label{eqn:system1}
\begin{aligned}
     \frac{\partial x}{\partial t}(\zeta,t) &= \left( P_2 \frac{\partial^2}{\partial \zeta^2} +  P_1 \frac{\partial}{\partial \zeta} + P_0(\zeta) \right)\mathcal{H}(\zeta) x(\zeta,t), \quad t>0, \quad \zeta \in (0,1) \\
     u(t) &= W_{B,1} \tau(\mathcal H x)(t) , \quad  0 = W_{B,2} \tau(\mathcal H x)(t), \quad t>0 \\
    y(t) &= W_C \tau (\mathcal H x)(t), \quad t>0 \\
    x(\zeta,0) &= x_0(\zeta), \quad \zeta \in [0,1],
\end{aligned}
\end{equation}
where $x(\zeta,t) \in \mathbb F^n$ ($\mathbb F=\mathbb R$ or $\mathbb C$), $u(t) , \, y(t) \in \mathbb F^m$ and $\tau$ is the trace operator, given by
\begin{equation}\label{eqn:trace}
    \tau(x) =
    \begin{pmatrix}
        x(1)  & x'(1) & x(0)  & x^{\prime}(0)
    \end{pmatrix}^\top .
\end{equation}
We assume that $P_0$, $P_1$, $P_2$ and ${\mathcal H}(\zeta)$ are $n\times n$-matrices, where $P_1$ is self-adjoint,  $P_2$ is skew-adjoint and invertible and ${\mathcal H}(\zeta)$ is selfadjoint and positive. Further,  $W_{B,1}$, $W_{B,2}$, $W_C$ are matrices of suitable sizes.
The state space is given by $X = \mathrm{L^2}((0,1);\mathbb F^n)$ with energy inner product
\begin{equation*}
    \left\langle f, g \right\rangle_{X} = \frac{1}{2} \int_0^1 g(\zeta)^\ast \mathcal H(\zeta) f(\zeta) d\zeta. 
\end{equation*} 
For simplicity, we restrict attention to the interval $[0,1]$, similar results hold for an arbitrary compact interval. This class of systems covers in particular the Schr\"odinger equation and the Euler-Bernoulli beam equations.

Using the port-Hamiltonian approach, internal well-posedness \cite{GorZwaMas:2005,Villegas:07}, stability and stabilizability \cite{Aug:16,Aug:15,AugJac:14,schmid2021stabilization}, observer design \cite{LeGorrecToledoRamirezWu:23} and robust output regulation \cite{HumLassi:18,humaloja2016robust,PaunLeGorrecRamirez:21} of the system \eqref{eqn:system1} have been investigated. Here, internal well-posedness refers to the well-definedness and boundedness of the mapping from the initial state to the final state. The well-posedness of system \eqref{eqn:system1} has remained an open problem for a long time and will be addressed in this paper. Our main result (Theorem \ref{0main_result}) provides an equivalent characterization in terms of a matrix condition in the case $m=2n$. In the general setting, Corollary \ref{corollary1} establishes a sufficient condition for well-posedness, again formulated in terms of a matrix condition. Partial results were obtained in \cite{ElghJacZw:25}, where the assumptions $P_1=0$ and constant $\mathcal H$ were imposed. The proof in \cite{ElghJacZw:25} rely on a diagonalization technique, which are not available in the general setting. Moreover, it was shown in \cite{ElghJacZw:25} that, for system \eqref{eqn:system1}, internal well-posedness does not, in general, imply well-posedness. Further, well-posedness under the assumption  $P_2=0$ and $P_1$ is invertible has been studied \cite{JacZw:12,ZwGorMasVill:10}. In this situation internal well-posedness is equivalent to well-posedness. Beside well-posedness we characterise exact controllability and exact observability of system \eqref{eqn:system1}. In Theorem \ref{mainresult2} we prove that under the assumption $m=2n$ and $P_0(\zeta)^\ast=-P_0(\zeta)$ the well-posed of system \eqref{eqn:system1} implies exact controllability and exact observability. A similar result was obtain in \cite{JacKai} under the assumption $P_2=0$ and $P_1$ is invertible. 

\medskip
We proceed as follows. In the next section, we begin by providing the necessary mathematical background on boundary control and observation systems and on well-posedness. In Section~\ref{section3}, the main result on well-posedness is presented, and followed by Section \ref{section4} which is devoted to exact controllability and observability. Application for Euler-Bernoulli beam models are presented in Section~\ref{section5}. Finally, some concluding remarks and possible future topics are given in Section~\ref{section6}.

\medskip
\textbf{Notation.} Let $\mathbb F\in \{ \mathbb R ,\mathbb C\}$, with $\|\cdot\|$ denoting the Euclidean norm of $\mathbb F^{n}$. The state of the system at time $t$ and spatial position $\zeta$ is denoted by $x(\zeta,t)$, the derivative w.r.t.\ time $t$ is denoted by $\dot{x}$, while $x'$ refers to the derivative of $x$ w.r.t.\ $\zeta$. Moreover, $u(t) \in U$ and $y(t) \in Y$ denote the inputs and the outputs, respectively. 
For simplicity, we write $\mathcal H(\zeta) x(\zeta,t)$ as $(\mathcal H x)(\zeta,t)$.
Throughout, we denote by $\mathbb C^{+}_{\alpha}$ the right half-plane $\left\{s \in \mathbb C~\vert\,  \Re(s) > \alpha \right\}$ and the symbol $\lesssim$ is used to indicate an inequality up to a constant multiple $c>0$, where $c$ is generic and may vary from line to line.
For a linear (unbounded) operator $A$ on a Hilbert space $X$ with domain $D(A)$, we denote by $\rho(A)$ its resolvent set and by $\ker A$ its kernel. If $A$ is a densely defined linear operator, then its adjoint is denoted by $A^\ast: D(A^\ast) \subseteq X \to X$ and its (extrapolated) extension is denoted by $A_{-1}$, see \cite{engel2000one,pazy2012semigroups} for a formal definition. For Lebesgue and Sobolev spaces, we adopt the standard notation, such as in \cite{adams}. The space of bounded, linear operators between two Hilbert spaces $X$ to $Z$ is denoted by $\mathcal{L}(X,Z)$, while $C^k(X;Z)$ is the space of all functions on $X$ mapping into $Z$ that are $k$-times continuously differentiable. Henceforth, we abbreviate  $\mathcal L(X):= \mathcal L(X,X)$.
Additionally, for a positive bounded operator $A$, $A^{\frac{1}{2}}$ will represent the square root of $A$, see \cite{rudin1991functional} for a formal definition.

\section{Boundary control and observation systems}
In this section, we provide some background and useful preliminary results on boundary control and boundary observation systems of the form
\begin{equation}\label{BCS}
    \begin{aligned}
        \dot{x}(t) &= \A x(t), \quad x(0)=x_0, \\
        u(t) &= \BB x(t), \\
        y(t) &= \CC x(t).
    \end{aligned}
\end{equation}
 We briefly review some important definitions and facts based on previous work in the literature \cite{CurtZwa:20,JacZw:12,Sta:05,tucsnak2009observation}. 

\begin{defn}\cite[Definition 10.1.2]{CurtZwa:20}\label{def_BCS}
Let $X$, $U$ and $Y$ be complex Hilbert spaces. We call \eqref{BCS}, denoted by $(\A,\BB,\CC)$,
a boundary control and observation system on $(X,U,Y)$ if the following properties hold
\begin{enumerate}
    \item $\A : D(\A)\subset X \to X$, $\BB : D(\BB) \subset X \to U$, $\CC : D(\A) \subset X \to Y$ are linear operators with $D(\A) \subseteq D(\BB)$.
    \item The operator $A_0 :D(A_0) \to X$ defined by
    $$ A_0 x = \A x \quad \text{for } x \in D(A_0) := D(\A) \cap \ker(\BB) $$
    is the generator of a $C_0$-semigroup $(T(t))_{t\geq0}$ on $X$.
   \item There exists an operator $B \in \mathcal L(U, X)$ such that for all $u \in U$ we have $Bu \in D(\A)$, ${\A B \in \mathcal L(U, X)}$ and $$ \BB Bu = u, \quad u \in U. $$
  \item The operator $\CC$ is bounded from $D(A_0)$ to $Y$. Here, $D(A_0)$ is equipped with the graph norm.
\end{enumerate}
\end{defn}

Next, we define classical solutions for boundary control and observation systems.
\begin{defn}\cite[Definition 10.1.3]{CurtZwa:20}
    The pair $(x,y)$ is a classical solution of the boundary control and observation system \eqref{BCS} on $[0,T]$ if for $x_{0} \in D(\mathcal A)$ and $u \in C^{2}([0,T]; U)$, the pair $x \in C^1([0,T];X)$ and $y \in C([0,T];Y)$ satisfies \eqref{BCS} for all $t \in [0,T]$.
\end{defn}

For sufficiently smooth inputs, it is possible to reformulate the boundary control and observation system  into an abstract differential equation of the form
\begin{equation}\label{BCS_linear}
\begin{aligned}
    \dot{v}(t) &= A_0 v(t) - B\dot{u}(t) + \A B u(t),  \\
    v(0) &= v_0, 
\end{aligned}
\end{equation}
with the output defined by
\begin{equation}\label{output_y_v}
    y_{v}(t) = \mathcal{C} v(t) .
\end{equation}   
Since $A_0$ is the infinitesimal generator of a $C_0$-semigroup and $B$ and $\A B$ are bounded, the Cauchy problem \eqref{BCS_linear} has a unique classical solution for $v_0 \in D(A_0)$ and $u \in C^2([0,T]; U)$. Therefore, we have the following relation between the classical solutions of \eqref{BCS} and \eqref{BCS_linear}.

\begin{thm}\label{thm:classical_solution}\cite[Theorem 10.1.4]{CurtZwa:20}
    Consider the boundary control and observation system \eqref{BCS} and the abstract differential equation \eqref{BCS_linear} with the output \eqref{output_y_v}. Assume that $u \in C^2([0,T]; U)$. Then, if $v_0 = x_0 - Bu(0) \in D(A_0)$, the classical solutions of \eqref{BCS} and \eqref{BCS_linear}--\,\eqref{output_y_v} are related by 
    \begin{align*}
        v(t) &= x(t) - B u(t), \\
        y_{v}(t) &= y(t) - \mathcal{C} B u(t) .
    \end{align*}
   Furthermore, the classical solution of \eqref{BCS} is unique.
\end{thm}

Next, we provide a formal definition of well-posedness.
\begin{defn}\cite[Definition 13.1.3]{JacZw:12}
    The boundary control and observation system \eqref{BCS} is called well-posed, if there exist $t, m >0$ such that every classical solution of \eqref{BCS} satisfies
     \begin{equation}\label{well-posed}
         \|x(t)\|_{X}^2 + \int_0^{t} \|y(s)\|^2 ds \leq m \left( \|x_0\|_{X}^2 + \int_0^{t} \|u(s)\|^2 ds \right).
     \end{equation}
\end{defn}

We note that if the inequality \eqref{well-posed} holds for one $t>0$, then it holds for all $t>0$ \cite{JacZw:12}. Moreover, well-posedness is equivalent to the existence of \emph{mild solutions} for an arbitrary initial condition $x_0 \in X$ and an arbitrary input $u \in \mathrm L^2((0,t);U)$, such that $x$ is continuous and $y \in \mathrm L^2((0,t);Y)$ \cite[Section 13.1]{JacZw:12}. Further, the state and output depend continuously on the initial state and input function. 

\begin{remark}\cite[Section 11.1 and Theorem 11.2.1]{JacZw:12} Assume that the boundary control and observation system \eqref{BCS} is well-posed.
If $x_0 \in X$ and $u \in \mathrm H^1([0,T];U)$, then the mild solution of the system \eqref{BCS} is given by 
    \begin{align*}
        x(t) &= T(t)(x_0 - Bu(0)) + \int_0^t T(t-s) \left( \A Bu(s) - B \dot{u}(s) \right) ds + Bu(t).
    \end{align*}
In addition, if $u \in C^2([0,T]; U)$ and $x_0 - Bu(0) \in D(A_0)$, then the classical solution is a mild solution and the corresponding output is given by  
    \begin{align*}
         y(t) &= \mathcal{C} T(t)(x_0 - Bu(0)) + \mathcal{C} \int_0^t T(t-s) \left( \A Bu(s) - B \dot{u}(s) \right) ds + \mathcal{C} Bu(t).
    \end{align*}
\end{remark}

\begin{defn}\cite[Definition 3.2.12]{Aug:16}
    The boundary control and observation system \eqref{BCS} on $(X,U,U)$ is called impedance passive if, 
    \begin{equation}\label{eq:impedance passive}
        \Re\left\langle\A x, x \right\rangle_X \leq \Re\left\langle \BB x, \CC x \right\rangle_{U}, \quad x \in D(\A) .
    \end{equation}
    It is called impedance energy-preserving if 
    \begin{equation}
        \Re \left\langle\A x, x \right\rangle_X = \Re \left\langle \BB x, \CC x \right\rangle_{U}, \quad x \in D(\A) .
    \end{equation}
\end{defn}

\begin{defn}\cite[Definition 12.1.1]{JacZw:12}\label{defn:transfer_fct}
    Let $s \in \mathbb C$ and $u_0 \in U$. The triple $\left(u(t) ,x(t),y(t)\right)_{t\geq 0}$ is called an exponential solution of the boundary control and observation system \eqref{BCS} if there exist $x_0 \in X$, $y_0 \in Y$, such that 
    \begin{equation}\label{eqn:exp_sol}
        \left(u(t), x(t), y(t) \right) = \left( u_0 e^{st}, x_0 e^{st}, y_0 e^{st}\right), \quad \text{for a.e. } t \geq 0,
    \end{equation}
    and $(x,y)$ is the mild solution of the boundary control and observation system \eqref{BCS} with the input function $u(t)= u_0 e^{st} \in C^1([0,t]; U)$ and initial condition $x_0$.
    
Let $s \in \mathbb C$. If for every $u_0 \in U$ there exists an exponential solution, and
the corresponding output trajectory $y_0 e^{s t}, \, t \in [0,\infty)$ is unique, then we call the mapping $u_0 \mapsto y_0$ the transfer function at $s$, denoted by $\G(s)$.
The mapping $s \in \rho(A_0) \mapsto \G(s)$ exists for every $s \in \rho(A_0)$ \cite{JacZw:12}. 
\end{defn} 

\begin{thm}\label{thm:thm2.8}\cite[Theorem 12.1.3]{JacZw:12}
The transfer function of the boundary control and observation system \eqref{BCS} is given by
\begin{equation*}
    \G(s) = \CC(sI - A_0)^{-1} (\A B - sB) + \CC B, \quad s \in \rho(A_0).
\end{equation*}
Moreover, every exponential solution of \eqref{BCS} is also a classical solution. Furthermore, for $s \in \rho(A_0)$ and $u_0 \in U$, $\G(s)u_0$ can also be calculated as the (unique) solution of
\begin{equation}\label{syst:transfer_fct}
\begin{aligned}
    s x_0 &= \A x_0, \\
    u_0 &= \BB x_0, \\
    \G(s)u_0 &= \CC x_0.
\end{aligned}
\end{equation}
with $x_0 \in D(\A)$. Further, $x_0\in D(\A)$ is uniquely determined by \eqref{syst:transfer_fct}.
\end{thm}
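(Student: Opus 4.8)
The plan is to reduce everything to the reformulated Cauchy problem \eqref{BCS_linear}--\eqref{output_y_v} together with the variation-of-constants formula, exploiting that an exponential input $u(t)=u_0e^{st}$ produces an exponential inhomogeneity. Fix $s\in\rho(A_0)$ and $u_0\in U$, and set $z_0:=(\A B-sB)u_0\in X$ (well defined since $B,\A B\in\mathcal{L}(U,X)$). With $u(t)=u_0e^{st}$ one has $\A Bu(t)-B\dot{u}(t)=z_0e^{st}$, so the mild solution of \eqref{BCS_linear} with $v_0:=x_0-Bu_0$ is $v(t)=T(t)v_0+\int_0^t T(t-\sigma)z_0e^{s\sigma}\,d\sigma$. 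The key computational step is the identity
\[ \int_0^t T(t-\sigma)e^{s\sigma}z_0\,d\sigma = e^{st}(sI-A_0)^{-1}z_0 - T(t)(sI-A_0)^{-1}z_0, \qquad t\ge 0, \]
valid for all $s\in\rho(A_0)$ and all $z_0\in X$; writing $r:=(sI-A_0)^{-1}z_0\in D(A_0)$ it follows by integrating $\frac{d}{d\sigma}\bigl(e^{s\sigma}T(t-\sigma)r\bigr)=e^{s\sigma}T(t-\sigma)(sr-A_0r)=e^{s\sigma}T(t-\sigma)z_0$ over $[0,t]$. (For $\Re s$ large one could instead invoke $(sI-A_0)^{-1}=\int_0^\infty e^{-s\sigma}T(\sigma)\,d\sigma$ and extend by analyticity in $s$, but the identity above covers all of $\rho(A_0)$ at once.) Substituting gives $v(t)=T(t)\bigl(v_0-(sI-A_0)^{-1}z_0\bigr)+e^{st}(sI-A_0)^{-1}z_0$.

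Next I would characterize the exponential solutions. For $\bigl(u_0e^{st},x(t),y(t)\bigr)$ to be an exponential solution one needs $x(t)=x_0e^{st}$, equivalently $v(t)=(x_0-Bu_0)e^{st}=v_0e^{st}$ for all $t$ (both sides being continuous). Comparing with the formula above, $T(t)\xi=e^{st}\xi$ with $\xi:=v_0-(sI-A_0)^{-1}z_0$; differentiating at $t=0$ forces $\xi\in D(A_0)$ with $A_0\xi=s\xi$, and $s\in\rho(A_0)$ then gives $\xi=0$, i.e.\ $v_0=(sI-A_0)^{-1}(\A B-sB)u_0$. Conversely, this choice of $v_0$, hence of $x_0:=Bu_0+(sI-A_0)^{-1}(\A B-sB)u_0$, makes $v(t)=v_0e^{st}$ and therefore produces an exponential solution; in particular one exists and is unique for every $u_0$. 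The output is $y(t)=\CC v(t)+\CC Bu(t)=\bigl(\CC v_0+\CC Bu_0\bigr)e^{st}$, so $y_0$ is unique and
\[ \G(s)u_0 = \CC(sI-A_0)^{-1}(\A B-sB)u_0 + \CC Bu_0, \]
which is the claimed formula. Since $v_0=x_0-Bu(0)\in D(A_0)$ and $u$ is smooth, Theorem~\ref{thm:classical_solution} shows this exponential solution is in fact a classical solution.

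Finally I would connect $x_0$ with \eqref{syst:transfer_fct}. As $v_0\in D(A_0)\subseteq D(\A)$ and $Bu_0\in D(\A)$, we have $x_0=Bu_0+v_0\in D(\A)$; property~3 of Definition~\ref{def_BCS} gives $\BB Bu_0=u_0$, and $v_0\in D(A_0)=D(\A)\cap\ker\BB$ gives $\BB v_0=0$, so $\BB x_0=u_0$. Using $\A v_0=A_0v_0=A_0(sI-A_0)^{-1}(\A B-sB)u_0=sv_0-(\A B-sB)u_0$ we obtain $\A x_0=\A Bu_0+A_0v_0=sv_0+sBu_0=sx_0$, and $\CC x_0=\CC v_0+\CC Bu_0=\G(s)u_0$. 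For uniqueness, if $x_0'\in D(\A)$ also solves \eqref{syst:transfer_fct} for the same $u_0$, then $w:=x_0-x_0'\in D(\A)$ satisfies $\A w=sw$ and $\BB w=0$, hence $w\in D(A_0)$ with $(sI-A_0)w=0$; as $s\in\rho(A_0)$, $w=0$. This also confirms that the output $\CC x_0$ is uniquely determined.

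The only genuinely non-routine point is the variation-of-constants identity of the first step, together with the observation that a purely exponential state trajectory forces $v_0=(sI-A_0)^{-1}z_0$; both hinge on the injectivity of $sI-A_0$ for $s\in\rho(A_0)$. The remaining steps are bookkeeping with the defining properties of a boundary control and observation system and present no difficulty.
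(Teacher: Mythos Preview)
The paper does not supply its own proof of this theorem; it is quoted verbatim from \cite[Theorem~12.1.3]{JacZw:12}. Your argument is correct and is essentially the standard one: reformulate via $v=x-Bu$, evaluate the convolution integral for the exponential forcing using the resolvent identity, read off the unique $x_0$ from the eigenvector condition $T(t)\xi=e^{st}\xi$ (which, since $t\mapsto T(t)\xi$ is then differentiable, forces $\xi\in D(A_0)$ and $(sI-A_0)\xi=0$), and then verify \eqref{syst:transfer_fct} by direct computation. The only point worth making explicit is that the relation $v(t)=x(t)-Bu(t)$ you use to pass from an exponential \emph{mild} solution of \eqref{BCS} to a mild solution of \eqref{BCS_linear} is exactly how the mild solution of \eqref{BCS} is defined (cf.\ the remark following Theorem~\ref{thm:classical_solution}); once that identification is granted, every step is routine.
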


\begin{corollary}\label{cor:passive}
Let $u_0 \in U$ and $x_0 \in D(\mathcal A)$ be the unique solution of \eqref{syst:transfer_fct}.
If the boundary control and observation system is impedance passive, then we have the following inequality
    \begin{equation}
        \Re(s) \|x_0\|^2_X \leq \Re \left\langle u_0, \G(s) u_0 \right\rangle, 
    \end{equation}
for all $s$ such that $\G(s)$ exists.
\end{corollary}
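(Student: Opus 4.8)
The plan is to combine the impedance passivity inequality \eqref{eq:impedance passive} with the algebraic description of the transfer function in \eqref{syst:transfer_fct}; no heavy machinery is needed, since the statement is essentially a one-line substitution. By hypothesis, $x_0 \in D(\A)$ and $x_0$ solves \eqref{syst:transfer_fct}, so the first step is to apply \eqref{eq:impedance passive} at $x = x_0$, which yields
\[
\Re\left\langle \A x_0, x_0\right\rangle_X \le \Re\left\langle \BB x_0, \CC x_0\right\rangle_U .
\]

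The second step is to substitute the three identities $\A x_0 = s x_0$, $\BB x_0 = u_0$, and $\CC x_0 = \G(s) u_0$ provided by \eqref{syst:transfer_fct}. On the left-hand side, using that $\langle\cdot,\cdot\rangle_X$ is linear in its first argument, $\langle \A x_0, x_0\rangle_X = s\langle x_0,x_0\rangle_X = s\|x_0\|^2_X$, so its real part equals $\Re(s)\,\|x_0\|^2_X$. On the right-hand side, $\langle \BB x_0, \CC x_0\rangle_U = \langle u_0, \G(s) u_0\rangle_U$. Inserting both into the displayed inequality gives precisely $\Re(s)\,\|x_0\|^2_X \le \Re\left\langle u_0, \G(s) u_0\right\rangle$, which is the claim.

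I do not expect any genuine obstacle here. The only point requiring a little care is that impedance passivity is postulated only on $D(\A)$, so the whole argument hinges on $x_0$ lying in $D(\A)$; this is built into the hypothesis of the corollary and, more generally, is guaranteed by Theorem~\ref{thm:thm2.8}, which shows that the solution of \eqref{syst:transfer_fct} belongs to $D(\A)$ and is unique whenever $\G(s)$ exists — in particular for every $s \in \rho(A_0)$. Hence the inequality holds for all $s$ for which $\G(s)$ is defined.
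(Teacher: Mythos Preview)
Your proof is correct and follows essentially the same approach as the paper: apply the impedance passivity inequality at the solution $x_0$ of \eqref{syst:transfer_fct} and substitute the three identities. The paper phrases this via the exponential solution $x_0 e^{st}$ and then divides out the factor $e^{2\Re(s)t}$, but this detour is unnecessary --- your direct application at $x_0$ is cleaner and equivalent.
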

\begin{proof}
By Definition \ref{defn:transfer_fct}, the transfer function is related to the exponential solution \eqref{eqn:exp_sol} via
$$\left(u(t), x(t), y(t) \right) = \left( u_0 e^{st}, x_0 e^{st}, \G(s) u_0 e^{st} \right), \quad \text{for a.e. } t\geq 0. $$
By Theorem \ref{thm:thm2.8} $x_{0} \in D(\mathcal A)$ and \eqref{syst:transfer_fct} is satisfied. The impedance passivity of the system gives 
\begin{align*}
    \Re \left\langle \mathcal{A} x_0 e^{st}, x_0 e^{st} \right\rangle_X 
    &\leq \Re \left\langle \mathcal B x_{0} e^{st}, \mathcal C x_{0} e^{st} \right\rangle \\
    &= \left\langle u_0 e^{st}, \G(s) u_0 e^{st} \right\rangle.
\end{align*}  
Using \eqref{syst:transfer_fct}, we get
$$ \Re \left\langle s x_0 e^{st}, x_0 e^{st} \right\rangle_X \leq \Re \left\langle u_0 e^{st}, \G(s) u_0 e^{st} \right\rangle. $$
 We divide by $e^{2\Re(s)t}$ and obtain
\begin{align*}
   \Re(s) \|x_0\|_X^2 \leq \Re \left\langle u_0, \G(s) u_0 \right\rangle. 
\end{align*} 

\end{proof}

Whereas well-posedness requires the inequality \eqref{well-posed} to be satisfied, for impedance passive systems, well-posedness is already determined by the boundedness of the transfer function on some vertical line in the open right half-plane. We have the following result.
\begin{thm}\label{thm:well-posedness}\cite[Theorem 5.1]{Sta:02}
    An impedance passive boundary control and observation system is well-posed if and only if its transfer function $\G$ is bounded on some vertical line in the open right half plane $ \mathbb C^+_{0}$, i.e.~$\G :  \mathbb C^+_{0} \longrightarrow \mathcal L(U)$ satisfies
    $$ \exists~ r >0 \text{ such that } \sup_{\omega \in \mathbb R} \| \G(r + i\omega)\| <\infty. $$
\end{thm}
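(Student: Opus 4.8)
The plan is to prove the two implications separately. The forward implication is elementary, using only the well-posedness inequality and the exponential solutions of Theorem~\ref{thm:thm2.8}; the converse proceeds through the \emph{external Cayley transform}, which turns an impedance passive system into a scattering passive one --- for which well-posedness is immediate --- and then transfers well-posedness back, the transfer being exactly where the hypothesis on $\G$ is used.

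\textbf{Necessity.} Assume \eqref{well-posed} holds with constant $m$ (for one, hence for every $t>0$). Impedance passivity makes $A_0$ dissipative: for $x\in D(A_0)$ we have $\BB x=0$, so $\Re\langle A_0x,x\rangle_X=\Re\langle\A x,x\rangle_X\le\Re\langle\BB x,\CC x\rangle_U=0$. Since $A_0$ generates a $C_0$-semigroup, it thus generates a contraction semigroup, so $\mathbb C^+_0\subseteq\rho(A_0)$ and $\G$ is defined on $\mathbb C^+_0$. Fix $r>0$ and $u_0\in U$. By Theorem~\ref{thm:thm2.8} the exponential solution $\bigl(u_0e^{st},x_0e^{st},\G(s)u_0e^{st}\bigr)$ with $\Re s=r$ is a classical solution, hence obeys \eqref{well-posed}. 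Evaluating the two integrals, dividing by $e^{2rt}$, and letting $t\to\infty$ makes the $\|x_0\|^2$ term on the right vanish and leaves $\|\G(s)u_0\|^2\le m\|u_0\|^2$; therefore $\sup_{\omega\in\mathbb R}\|\G(r+i\omega)\|\le\sqrt m$.

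\textbf{Sufficiency.} By Corollary~\ref{cor:passive}, for every $s\in\mathbb C^+_0$ and $u_0\in U$ we have $\Re\langle u_0,\G(s)u_0\rangle\ge\Re(s)\|x_0\|^2\ge0$, i.e.\ $\G(s)+\G(s)^\ast\ge0$; consequently $\Re\langle(\G(s)+I)u_0,u_0\rangle\ge\|u_0\|^2$, so $\G(s)+I$ is boundedly invertible, and $\mathbf F(s):=(\G(s)-I)(\G(s)+I)^{-1}$ is analytic on $\mathbb C^+_0$ with $\|\mathbf F(s)\|\le1$, the norm bound coming from the identity $\|(\G(s)+I)u\|^2-\|(\G(s)-I)u\|^2=4\Re\langle\G(s)u,u\rangle\ge0$. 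Hence $\mathbf F\in H^\infty(\mathbb C^+_0;\mathcal L(U))$, and $\mathbf F$ is the transfer function of the external Cayley transform $\Sigma'$ of \eqref{BCS}, obtained by replacing the port signals $(u,y)$ with $\bigl(\tfrac1{\sqrt2}(u+y),\tfrac1{\sqrt2}(y-u)\bigr)$. Using that $\G(s)+I$ is boundedly invertible on $\mathbb C^+_0$, one checks that $\Sigma'$ is again a boundary control and observation system on $(X,U,U)$ and that the impedance passivity of $\Sigma$ becomes scattering passivity of $\Sigma'$; hence every classical solution of $\Sigma'$ satisfies $\|x(t)\|^2+\int_0^t\|y(\sigma)\|^2\,d\sigma\le\|x_0\|^2+\int_0^t\|u(\sigma)\|^2\,d\sigma$, that is, $\Sigma'$ is well-posed with constant $1$.

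\textbf{Transferring back; the main obstacle.} The original system $\Sigma$ is recovered from $\Sigma'$ by the inverse external Cayley transform, which at the signal level is the feedback $u'=\sqrt2\,u+y'$, $y=\tfrac1{\sqrt2}(u'+y')$, and at the transfer-function level reads $\G(s)=(I+\mathbf F(s))(I-\mathbf F(s))^{-1}$ with, after a short computation, $(I-\mathbf F(s))^{-1}=\tfrac12(I+\G(s))$. This feedback is admissible --- so that the closed loop is again well-posed --- precisely when $(I-\mathbf F(\cdot))^{-1}$, equivalently $\G$, is uniformly bounded on some vertical line in $\mathbb C^+_0$, which is exactly the hypothesis. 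The crux, and the step I expect to require the most care, is to turn this into a proof at the level of the system rather than the transfer function: passing through the equivalence --- recorded just after the definition of well-posedness --- between \eqref{well-posed} and the existence of bounded mild solutions, one must show that the input, output, and input--output maps of $\Sigma$ are bounded by combining the corresponding boundedness for the well-posed system $\Sigma'$ with the feedback formulas and $\G\in H^\infty$ on a vertical line. A more self-contained alternative, avoiding the Cayley machinery, is to estimate the mild-solution formulas for $x(t)$ and $y(t)$ directly via Plancherel and the Paley--Wiener theorem, controlling the state through $\Re(s)\|x_0\|^2\le\Re\langle u_0,\G(s)u_0\rangle$ (Corollary~\ref{cor:passive}) and the output through $\sup_{\omega\in\mathbb R}\|\G(r+i\omega)\|<\infty$; the bookkeeping there is where the real work lies.
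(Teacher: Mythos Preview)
The paper does not prove this result; Theorem~\ref{thm:well-posedness} is quoted from Staffans \cite[Theorem~5.1]{Sta:02} and used as a black box throughout. Your outline is in fact Staffans's own argument --- external Cayley transform to a scattering-passive system, well-posedness of the latter for free, then transfer back by admissible feedback using the vertical-line bound on $\G$ --- so there is no alternative in the paper to compare against.

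Two remarks on the sketch itself. In the necessity direction, the constant $m$ in \eqref{well-posed} is attached to one fixed time; when one extends to all $t$ the constant becomes $m_t$ and may grow (at most exponentially), so your limit $t\to\infty$ only yields boundedness on lines $\Re s=r$ with $r$ exceeding that growth rate --- still enough for the statement, but not the uniform~$m$ you wrote. In the sufficiency direction, the sentence ``one checks that $\Sigma'$ is again a boundary control and observation system'' hides the genuine technical step: you must show that $\A$ restricted to $\ker(\BB+\CC)$ generates a $C_0$-semigroup, which is a range condition on $\lambda I-\A$ over that kernel and is exactly where the invertibility of $I+\G(\lambda)$ is used at the operator level (not merely at the level of transfer functions). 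Only once $\Sigma'$ is a bona fide boundary control system does the scattering-passivity inequality apply to its classical solutions and give well-posedness with constant~$1$; after that your feedback argument via Theorem~\ref{thm:feedback} and $(I-\mathbf F(s))^{-1}=\tfrac12(I+\G(s))$ is correct.
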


For well-posed boundary control and observation systems, even if the transfer function is bounded on some right half plane, this does not imply that the limit as $\Re(s) \to \infty$ exists. Well-posed systems with this property are called regular.

\begin{defn}\cite[Definition 4.4]{Wei:94}
Let $\G$ be the transfer function of a well-posed boundary control and observation system. The boundary control and observation system is called regular if $\displaystyle\lim_{s \in \mathbb{R}, s \to \infty} \G(s)$ exists. 
If the boundary control system is regular, then the feedthrough term $D$ is defined as $D = \underset{s \in \mathbb R, s \to \infty}{\lim} \G(s)$.
\end{defn}

\medskip
Next, we state that well-posed boundary control and observation systems are stable under bounded perturbations and admissible feedback operators.
\begin{lem}\label{lem:perturbation}
    Let $\G$ be the transfer function of the boundary control and observation system $\left( \A, \BB, \CC\right)$, and let $P$ be a bounded linear operator on $X$. Then the boundary control and observation system $\left( \A, \BB, \CC\right)$ is well-posed if and only if the the boundary control and observation system $\left(\mathcal{A}+P, \BB,\CC\right)$ is well-posed. Moreover, if $\G_P$ is the transfer function of the boundary control and observation system $\left( \A +P, \BB, \CC\right)$, then $\displaystyle\lim_{s \in \mathbb R, s \to \infty} \G(s)$ exists if and only if $\displaystyle\lim_{s \in \mathbb R, s \to \infty} \G_P(s)$ exists and 
    \begin{equation*}
        \lim_{s \in \mathbb R, s \to \infty} \G(s) = \lim_{s \in \mathbb R, s \to \infty} \G_P(s) .
    \end{equation*}
    Moreover, if $\displaystyle\lim_{\Re(s)\to\infty} \G(s)$ exists, then 
    \begin{equation*}
        \lim_{\Re(s) \to \infty} \G(s) = \lim_{\Re(s) \to \infty} \G_P(s) .
    \end{equation*}
\end{lem}
\begin{proof}
    Let $\left( \A, \BB, \CC\right)$ be a well-posed system. Consider the system 
    \begin{equation}\label{syst:perturbed_BCS}
    \begin{aligned}
        \dot{x}(t) &= (\A + P) x(t), \\
        u(t) &= \BB x(t), \\
        y(t) &= \CC x(t). 
    \end{aligned}
    \end{equation}
    To show that \eqref{syst:perturbed_BCS} is well-posedness, we need to show that $A_{P}:= (\A +P)_{|D(\A) \cap \ker \BB}$ generates a $C_{0}$-semigroup, the input and output operators of \eqref{syst:perturbed_BCS} are admissible and the associated transfer function $\G_{P}$ is bounded on a right-half plane. Since $(\A, \BB, \CC)$ is well-posed and $P$ is bounded, $A_{P}:= (\A + P)_{|D(\A) \cap \ker \BB}$ generates a $C_{0}$-semigroup and the input and output operators of \eqref{syst:perturbed_BCS} are admissible see \cite[Theorem 1.3]{engel2000one} and \cite[Remark 2.11.3]{tucsnak2009observation}. In fact, since $A_{0}$ generates a $C_{0}$-semigroup and $P$ is bounded, $A_{P}$ generates a $C_{0}$-semigroup as well \cite[Theorem 1.3]{engel2000one}. Moreover, the class of admissible operators remains unchanged under bounded perturbations \cite[Theorem 5.4.2]{tucsnak2009observation}. Hence, it only remains to show that the transfer function of the perturbed system \eqref{syst:perturbed_BCS} is bounded in some right-half plane and that if \eqref{BCS} is regular then \eqref{syst:perturbed_BCS} is again regular and their transfer functions converge to the same limit as $\Re(s) \to \infty$. To this end, let $s \in \rho(A_{P}) \cap \rho(A_{0})$ and $u_{0} \in U$. The transfer function $\G_{P}$ of the perturbed system \eqref{syst:perturbed_BCS} can be calculated via the set of equations
    \begin{align*}
        s x_{0} &= (\A + P) x_{0}, \\
        u_{0} &= \BB x_{0}, \\
        \G_{P}(s) u_{0} &= \CC x_{0}.
    \end{align*}
    By splitting $x_{0}= p_{0} + q_{0}$ with $p_{0}$ satisfying
    \begin{align*}
        s p_{0} &= \A p_{0}, \\
        u_{0} &= \BB p_{0}, \\
        \G(s) u_{0} &= \CC p_{0},
    \end{align*}
    we see that $q_{0}$ satisfies
    \begin{align*}
        s q_{0} &= (\A + P) q_{0} + P p_{0}, \\
        0 &= \BB q_{0} .
    \end{align*}
    This implies that $q_{0} \in D(A_{0})=D(A_{P})$ and 
    \begin{equation*}
        q_{0} = (sI - A_{P})^{-1} P  p_{0} .
    \end{equation*}
    Thus, the transfer function $\G_{P}(s)$ is given by
    \begin{align*}
        \G_{P}(s) u_{0} 
        &= \CC p_{0} + \CC q_{0} \\
        &= \G(s) u_{0} + \CC (sI - A_{P})^{-1} P p_{0}.
    \end{align*} 
    From \cite[Section 8]{JacZwa_gamm} and \cite[Theorem 12.1.3]{JacZw:12}, $p_{0}$ is uniquely determined as 
    \begin{equation}\label{eqn:p_0}
        p_{0} = (sI - A_{0})^{-1} \left( \A B - A_{0,-1} B \right) u_{0}.
    \end{equation}
    Let $\tilde B = \left( \A B - A_{0,-1} B \right)$. For $s \in \rho(A_{0}) \cap \rho(A_{P})$, we have
    \begin{equation*}
         \G_{P}(s) u_{0} = \G(s) u_{0} + \CC (sI - A_{P})^{-1} P (sI - A_{0})^{-1} \tilde{B} u_{0} .
    \end{equation*}
    Furthermore, from the admissibility of $\CC$ and the boundedness of $P$, it follows that for some $\alpha>0$, there exists $k>0$ such that for $s \in \mathbb C^{+}_{\alpha}$
    \begin{align*}
        \|\CC (sI - A_{P})^{-1} P p_{0}\| 
        &\le \|\CC (sI - A_{P})^{-1}\| \| P\| \|p_{0}\| \\
        &\le \frac{k}{\sqrt{\Re(s) - \alpha}} \|P\| \|u_{0}\| .
    \end{align*}
    Since $(\A, \BB, \CC)$ is well-posed, the transfer function $\G(s)$ is bounded on $\mathbb C^{+}_{\alpha}$. Therefore, we conclude that $\G_{P}(s)$ is also bounded on $\mathbb C^{+}_{\alpha}$ and thus the perturbed system \eqref{syst:perturbed_BCS} is well-posed. Moreover, if $\displaystyle\lim_{s \in \mathbb R, s\to\infty} \G(s)$ and $\displaystyle\lim_{\Re(s)\to\infty} \G(s)$ exist, then by \eqref{eqn:p_0}, we see that $p_{0}$ converges to zero as $s \in \mathbb R$, $s \to \infty$ and as $\Re(s) \to \infty$. Therefore, we obtain that
    \begin{equation*}
        \lim_{s\in \mathbb R, s\to\infty} \G_{P}(s) u_{0} = \lim_{s \in \mathbb R, s\to\infty} \G(s) u_{0} = \lim_{s \in \mathbb R, s\to\infty} \CC p_{0} ,
    \end{equation*}
    and 
    \begin{equation*}
        \lim_{\Re(s)\to\infty} \G_{P}(s) u_{0} = \lim_{\Re(s)\to\infty} \G(s) u_{0} = \lim_{\Re(s) \to\infty} \CC p_{0}  .
    \end{equation*}
    Thus, we have shown that if $(\A,\BB,\CC)$ is regular, then the perturbed system \eqref{syst:perturbed_BCS} is again regular. 
\end{proof}

\begin{thm}\label{thm:feedback}(\cite[Proposition 4.9]{Wei:94} and \cite[Theorem 13.1.12]{JacZw:12})
Let $\G$ be the transfer function of the well-posed boundary control and observation system \eqref{BCS}.
Let $F$ be a bounded linear operator from $Y$ to $U$ such that the inverse of $I+\G(s)F$ $\left(\text{or } I+F\G(s)\right)$ exists and is bounded in $s$ in some right half-plane. Then the closed-loop system is again well-posed, that is, the boundary control and observation system
    \begin{align*}
         \dot{x}(t) &= \A x(t) , \\
         u(t) &=(\mathcal{B} + F\mathcal{C})x(t) , \\
         y(t) &= \mathcal{C}x(t),
    \end{align*}
    is again well-posed.
    The operator $F$ is called an admissible feedback operator. 
\end{thm}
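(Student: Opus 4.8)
The plan is to eliminate the feedback and reduce well-posedness of the closed-loop system to that of the given open-loop system, the price being the invertibility of an operator on $\mathrm L^2((0,t);U)$ whose Laplace symbol is $I+F\G(s)$. First I would observe: if $(x,y)$ is a classical solution of the closed-loop system with input $u$, then setting $\tilde u(t):=\BB x(t)=u(t)-F\CC x(t)=u(t)-Fy(t)$ turns $(x,y)$ into a classical solution of the open-loop system $(\A,\BB,\CC)$ with input $\tilde u$; conversely, an open-loop solution $(x,y)$ with input $\tilde u$ and output $y$ yields a closed-loop solution with input $u:=\tilde u+Fy$. Hence the closed-loop system is well-posed exactly when, for every $x_0\in X$ and $u\in\mathrm L^2((0,t);U)$, the relation $u=\tilde u+Fy$, with $y$ the open-loop output generated by $(x_0,\tilde u)$, admits a solution $\tilde u$ depending continuously on $(x_0,u)$, and the resulting $(x,y)$ satisfies \eqref{well-posed} with $u$ on the right-hand side.

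Next I would make this implicit relation explicit. By well-posedness of the open loop the output depends linearly and boundedly on the data, $y=\Psi_t x_0+\mathbb F_t\tilde u$ with $\Psi_t\in\mathcal L(X,\mathrm L^2((0,t);Y))$ and $\mathbb F_t\in\mathcal L(\mathrm L^2((0,t);U),\mathrm L^2((0,t);Y))$ causal; here $\mathbb F_t$ is the truncation to $(0,t)$ of the shift-invariant input--output operator $\mathbb F$ of $(\A,\BB,\CC)$, whose Laplace symbol is the transfer function $\G$. The identity $u=\tilde u+Fy$ then becomes $(I+F\mathbb F_t)\tilde u=u-F\Psi_t x_0$, so the whole matter reduces to showing that $I+F\mathbb F_t$ is boundedly invertible on $\mathrm L^2((0,t);U)$. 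A direct Neumann series is not available, since $\|\mathbb F_t\|$ need not be small for small $t$ (for regular systems it tends to the feedthrough norm), which is precisely why a frequency-domain hypothesis on $\G$ is indispensable.

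The core step is the invertibility of $I+F\mathbb F_t$. By well-posedness, $\G$ is bounded and analytic on $\mathbb C^{+}_{\alpha}$ for some $\alpha$; the hypothesis that $(I+\G(s)F)^{-1}$ exists and is uniformly bounded on a right half-plane --- equivalently $(I+F\G(s))^{-1}$, via $(I+F\G)^{-1}=I-F(I+\G F)^{-1}\G$ --- then makes $s\mapsto(I+F\G(s))^{-1}=I-F\G(s)(I+F\G(s))^{-1}$ a bounded analytic $\mathcal L(U)$-valued function on $\mathbb C^{+}_{\alpha}$ (after enlarging $\alpha$). By the Paley--Wiener theorem it is the symbol of a bounded causal shift-invariant operator $\mathbb K$ on a suitable exponentially weighted $\mathrm L^2$-space on $(0,\infty)$ with $\mathbb K(I+F\mathbb F)=(I+F\mathbb F)\mathbb K=I$; restricting to $(0,t)$ --- legitimate because both factors are causal --- gives $I+F\mathbb F_t$ invertible in $\mathcal L(\mathrm L^2((0,t);U))$. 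Then $\tilde u=(I+F\mathbb F_t)^{-1}(u-F\Psi_t x_0)$ with $\|\tilde u\|_{\mathrm L^2}\le c(\|x_0\|+\|u\|_{\mathrm L^2})$, and inserting this $\tilde u$ into the open-loop estimate \eqref{well-posed} and bounding the right-hand side produces \eqref{well-posed} for the closed-loop system; hence $F$ is an admissible feedback operator.

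The main obstacle is this last paragraph: converting the frequency-domain admissibility condition into the time-domain invertibility of $I+F\mathbb F_t$. It rests on the Paley--Wiener correspondence between $H^\infty$-symbols on a half-plane and bounded causal shift-invariant operators, and on careful handling of causality and of the exponential weight used to travel between $\mathbb C^{+}_{\alpha}$, the weighted space on $(0,\infty)$, and finally $\mathrm L^2((0,t);U)$; everything else is routine bookkeeping with the well-posedness inequality. I would remark that, stated abstractly, this is exactly \cite[Proposition 4.9]{Wei:94}, so an alternative is to recast \eqref{BCS} as an abstract well-posed linear system and invoke that result directly.
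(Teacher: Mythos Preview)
The paper does not give a proof of this theorem; it is simply quoted from \cite[Proposition 4.9]{Wei:94} and \cite[Theorem 13.1.12]{JacZw:12}. Your sketch is correct and is essentially the argument found in those references: you reduce the closed-loop inequality to the open-loop one by writing $u=\tilde u+Fy$, express $y$ through the open-loop input--output map $\mathbb F_t$, and then invert $I+F\mathbb F_t$ on $\mathrm L^2((0,t);U)$ via the Paley--Wiener correspondence between bounded analytic symbols on a half-plane and bounded causal shift-invariant operators. Your closing remark is exactly to the point --- the paper treats this as a known result and invokes it rather than proving it.
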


We conclude the section by a useful lemma.
\begin{lem}\label{lem:invertibility}
    Let $\left(\mathcal A, \mathcal B_0, \mathcal C_0\right)$ and $\left(\mathcal A, \mathcal B_1, \mathcal C_1 \right)$ be two well-posed boundary control and observation systems on $\left(X,U,Y\right)$ and let $Q \coloneqq \begin{pmatrix}
        Q_{11} & Q_{12} \\
        Q_{21} & Q_{22}
    \end{pmatrix} \in \mathcal L(U \times Y)$ be such that 
    \begin{equation}\label{eqn:Q}
        Q \begin{pmatrix}
            \mathcal B_0 x \\
            \mathcal C_0 x
        \end{pmatrix} = \begin{pmatrix}
            \mathcal B_1 x \\
            \mathcal C_1 x
        \end{pmatrix}, \quad x \in D(\mathcal{A}).
    \end{equation}
    We denote by $\G_0$ and $\G_1$ the transfer functions of $\left(\mathcal A, \mathcal B_0, \mathcal C_0\right)$ and $\left(\mathcal A, \mathcal B_1, \mathcal C_1 \right)$, respectively and we assume that $\displaystyle \lim_{\Re(s) \to \infty} \G_0(s) =0$. If $Q$ is invertible, then $Q_{11}$ is invertible.
\end{lem}
\begin{proof} 
Since the two systems $\left(\mathcal A, \mathcal B_0, \mathcal C_0\right)$ and $\left(\mathcal A, \mathcal B_1, \mathcal C_1 \right)$ are well-posed, there exist $\alpha > 0$ and $M_0,  M_1>0$ such that (see \cite{Weiss2001})
\begin{equation*}
    \sup_{\Re(s)>\alpha} \| \G_0(s) \| < M_0 , \quad \text{and }
    \sup_{\Re(s)>\alpha} \| \G_1(s) \| < M_1 .
\end{equation*} 
By Theorem \ref{thm:thm2.8}, for $u_{0} \in U$ and $s \in \mathbb C^+_{\alpha}$, $\G_0(s)u_0$ is the unique solution of \eqref{syst:transfer_fct}.
Therefore, applying the matrix 
$\begin{psmallmatrix}
    Q_{11} & Q_{12} \\
    Q_{21} & Q_{22}
\end{psmallmatrix}$ 
to the input and the output equations we obtain that 
\begin{equation*}
\begin{aligned}
    s x_0  &= \mathcal{A} x_0 , \\
    \begin{pmatrix}
        \mathcal{B}_1 x_0 \\
        \mathcal{C}_1 x_0 
    \end{pmatrix} &= \begin{pmatrix}
        Q_{11} & Q_{12} \\
        Q_{21} & Q_{22}
    \end{pmatrix} \begin{pmatrix}
        u_0  \\
        \G_0(s) u_0 
    \end{pmatrix}.
\end{aligned}
\end{equation*}
Defining 
\begin{align*}
    u_1(s) &= Q_{11} u_0 + Q_{12} \G_0(s) u_0, \\
    y_1(s) &= Q_{21} u_0 + Q_{22} \G_0(s) u_0 ,
\end{align*}
it follows that 
\begin{align*}
    s x_0  &= \mathcal{A} x_0 , \\
    \mathcal{B}_1 x_0  &= u_1(s) , \\
    \mathcal{C}_1 x_0  &= y_1(s) .
\end{align*}
As the transfer function is unique, we have $y_1(s) = \G_1(s) u_1(s)$. Hence, we have shown that for every $u_{0} \in U$ and $s \in \mathbb C^{+}_{\alpha}$ there exists $u_1(s) \in U $ such that
\begin{equation}\label{eqn:2.10}
    \begin{pmatrix}
        u_1(s) \\
        \G_1(s) u_1(s)
    \end{pmatrix} = Q \begin{pmatrix}  
         u_0 \\
        \G_0(s) u_0
    \end{pmatrix} .
\end{equation}
In addition, as $Q$ is invertible, the same argument shows that for every $u_{1} \in U$ and $s \in \mathbb C^{+}_{\alpha}$ there exists $ u_0(s) \in U $ such that 
\begin{equation}\label{eqn:2.11}
    Q \begin{pmatrix}
       u_{0}(s) \\
       \G_0(s) u_{0}(s)
    \end{pmatrix}  = \begin{pmatrix}
       u_{1}\\
       \G_1(s) u_{1}
    \end{pmatrix}  .
\end{equation}
Using the equation \eqref{eqn:2.10}, for $s \in \mathbb C^{+}_{\alpha}$ and $u_{0} \in U$ we have
\begin{align*}
 M_1 \| Q_{11} u_{0} \| + M_1 \| Q_{12} \G_0(s) u_{0} \| 
        &\ge M_1 \| Q_{11} u_{0} + Q_{12} \G_0(s) u_{0} \|  \\
        &= M_1 \|u_{1}(s)\| \\
        &\ge \| \G_1(s) u_{1}(s) \| \\
        &= \| Q_{21} u_{0} + Q_{22} \G_0(s) u_{0}\| \\
        &\geq \|Q_{21} u_{0}\| - \|Q_{22} \G_0(s) u_{0} \| .
\end{align*}
As the system $\left(\mathcal A, \mathcal B_0, \mathcal C_0 \right)$ is regular with a feedthrough $0$, taking the limit as $s \in \mathbb R$, $s \to \infty$, we get that 
\begin{equation}\label{eqn:M}
      M_1 \| Q_{11} u_{0} \|  \ge \|Q_{21} u_{0}\|, \quad u_0 \in U .
\end{equation}
Since $Q$ is bounded and invertible, the inverse $Q^{-1}$ is bounded as well. Thus $Q$ is bounded from below, i.e. there exists $C>0$ such that for $\begin{psmallmatrix}
    u_{0} \\ y_{0}
\end{psmallmatrix} \in U \times Y$ we have
\begin{equation*}
    \left\|Q \begin{pmatrix}
    u_{0} \\ y_{0}
\end{pmatrix} \right\| \ge C \left\|\begin{pmatrix}
    u_{0} \\ y_{0}
\end{pmatrix} \right\|. 
\end{equation*}
However, by \eqref{eqn:M} for every $u_0 \in U$
\begin{align*}
    \left(1+M_{1}^2 \right) \|Q_{11} u_0\|^2 
    &\ge \|Q_{11} u_0\|^2 + \|Q_{21} u_0\|^2 \\
    &= \left\| Q \begin{pmatrix}
        u_0 \\ 0
    \end{pmatrix} \right\|^2 \\
    &\ge C^2 \left \| \begin{pmatrix}
        u_0 \\ 0 
    \end{pmatrix} \right\|^2  = C^2 \|u_0\|^2 .
\end{align*}
Thus, for $\Tilde{C}=\frac{C^2}{1+M_{1}^{2}}$,
\begin{equation*}
    \| Q_{11} u_0 \|^2 \ge \Tilde{C} \|u_0\|^2 .
\end{equation*}
This implies that $\ker Q_{11} =\{0\}$ and $\operatorname{ran} Q_{11}$ is closed. \\
Assuming $Q_{11}$ is not surjective, there exists $u_1 \in U$ with $u_1 \not= 0$ and $u_1 \in \operatorname{ran} Q_{11}^\perp$. Using equation \eqref{eqn:2.11}, we have that for $s \in \mathbb C^{+}_{\alpha}$ there exists $ u_0(s) \in U $ such that 
\begin{equation}\label{eqn:2.13}
    \begin{pmatrix}
        Q_{11} & Q_{12} \\
        Q_{21} & Q_{22}
    \end{pmatrix}  \begin{pmatrix}
       u_{0}(s) \\
       \G_0(s) u_{0}(s)
    \end{pmatrix}  = \begin{pmatrix}
       u_{1}\\
       \G_1(s) u_{1}
    \end{pmatrix}  .
\end{equation}
Since $Q$ is invertible and $\G_{1}(s)$ is bounded on $\mathbb C^{+}_{\alpha}$, \eqref{eqn:2.13} gives that $u_{0}(s)$ is bounded on $\mathbb C^{+}_{\alpha}$ as well. Now,
\begin{align*}
    \|u_{1}\|^{2}
    &= \displaystyle\lim_{s \in \mathbb R, s \to \infty}  \|u_{1}\|^{2} \\
    &= \displaystyle\lim_{s \in \mathbb R, s \to \infty} \left\langle \begin{pmatrix}
        u_{1} \\ 
        0
    \end{pmatrix}, Q \begin{pmatrix}
        u_{0}(s) \\
        \G_{0}(s) u_{0}(s)
    \end{pmatrix} \right\rangle \\
    &= \displaystyle\lim_{s \in \mathbb R, s \to \infty} \left\langle u_{1} , \left(Q_{11} + Q_{12} \G_{0}(s) \right) u_{0}(s) \right\rangle \\
    &= \displaystyle\lim_{s \in \mathbb R, s \to \infty} \left\langle u_{1} , Q_{12} \G_{0}(s) u_{0}(s) \right\rangle = 0
\end{align*}
which gives a contradiction. We conclude that $Q_{11}$ is invertible. 
\end{proof}

\section{Analysis of well-posedness}\label{section3}
We now turn our attention to the class of systems from \eqref{eqn:system1}, where $\tau : \mathrm H^2((0,1);\mathbb F^n) \to \mathbb F^{4n}$ is the trace operator given by \eqref{eqn:trace}. 
We make the following assumptions.
\begin{assumption}\label{assumption}
    \begin{itemize}
        \item $P_1, \, P_2 \in \mathbb F^{n \times n}$ such that $P_1$ is self-adjoint and $P_2$ is skew-adjoint and invertible;
        \item $P_0 \in \mathrm L^\infty((0,1);\mathbb F^{n \times n})$;
        \item $\mathcal H \in C^1([0,1]; \mathbb F^{n\times n})$ such that $\mathcal H(\zeta)$ is self-adjoint for all $\zeta \in [0,1]$ and there exist $M, m > 0$ such that $mI \le \mathcal H(\zeta) \le M I$ for all $\zeta \in [0,1]$;
        \item The matrices $W_{B,1}, W_C \in \mathbb F^{m \times 4n}$ with $0<m\le 2n$, and $W_{B,2}\in \mathbb F^{(2n-m) \times 4n}$ are such that $\left[\begin{smallmatrix} W_{B,1}\\ W_{B,2} \\ W_{C} \end{smallmatrix}\right]$ has full row rank. 
    \end{itemize}
\end{assumption}

We choose $X = \mathrm L^2((0,1);\mathbb F^n)$ equipped with the inner product \begin{equation}\label{inner_product_X}
    \left\langle f, g\right\rangle_{X} = \frac{1}{2} \int_0^1 g(\zeta)^\ast \mathcal H(\zeta) f(\zeta) d\zeta. 
\end{equation} 
We note that the standard $\mathrm{L^2}$-norm is equivalent to the norm defined by \eqref{inner_product_X}, i.e., for all $f \in \mathrm{L^2}((0,1); \mathbb F^n)$
  $$ \frac{m}{2} \|f\|^2_{\mathrm{L^2}} \leq \|f\|^2_X \leq \frac{M}{2} \|f\|^2_{\mathrm{L^2}} . $$

\begin{remark}\label{rem:3.3}
By defining
\begin{equation}\label{eqn:3.3}
\begin{aligned}
    \mathcal{A} x &= \left( P_2 \frac{\partial^2}{\partial\zeta^2}  + P_1 \frac{\partial}{\partial \zeta} + P_0 \right) \mathcal H x, \\
    \BB x &= W_{B,1} \tau(\mathcal H x), \\
    \CC x &= W_C \tau(\mathcal H x), \\
    D(\mathcal{A}) &= \left\{ x \in \mathrm L^2((0,1);\mathbb F^n) \, | \, \mathcal H x \in \mathrm H^2((0,1);\mathbb F^n), \, W_{B,2} \tau(\mathcal H x)=0 \right\}, \\
     D(\BB) &= D(\A),
\end{aligned}
\end{equation}
the system \eqref{eqn:system1} is a boundary control and observation system, provided the operator $A_0 \coloneqq \A_{\mid\ker \BB}$ is the infinitesimal generator of a $C_0$-semigroup on $X$, see \cite{GorZwaMas:2005}. 
\end{remark}

\begin{remark}\label{remark1}
If \eqref{eqn:system1} is an impedance passive boundary control and observation system, then the operator $A_0 = \mathcal{A}_{\mid\ker \BB}$ is dissipative and in this case the $C_0$-semigroup generated by $A_0$ is a contraction semigroup on $X$, see \cite{Aug:16,Villegas:07}. If in addition, $m =2n$ and $P_0(\zeta)^\ast = -P_0(\zeta)$ for a.e. $\zeta \in (0,1)$, then the impedance passivity is equivalent to
\begin{equation*}
     \begin{bmatrix}
    \tilde W_B \Sigma \tilde W_B^\ast  & \tilde W_B \Sigma \tilde W_C^\ast \\
    \tilde W_C \Sigma \tilde W_B^\ast & \tilde W_C \Sigma \tilde W_C^\ast 
    \end{bmatrix}^{-1} \leq \begin{bmatrix}
        0 & I \\
        I & 0 
    \end{bmatrix},
\end{equation*} 
and impedance energy-preservation is equivalent to
\begin{equation*}
     \begin{bmatrix}
    \tilde W_B \Sigma \tilde W_B^\ast  & \tilde W_B \Sigma \tilde W_C^\ast \\
    \tilde W_C \Sigma \tilde W_B^\ast & \tilde W_C \Sigma \tilde W_C^\ast 
    \end{bmatrix}^{-1} = \begin{bmatrix}
        0 & I \\
        I & 0 
    \end{bmatrix},
\end{equation*} 
where 
    \begin{align*}
     \tilde W_B &= \frac{1}{\sqrt{2}} \left[\begin{matrix}
         W_{B,1} \\ W_{B,2}
     \end{matrix}\right] \left[\begin{matrix}
           R & I \\
          -R & I
     \end{matrix} \right], \quad \tilde W_C= \frac{1}{\sqrt{2}} W_C \left[\begin{matrix}
           R & I \\
          -R & I
     \end{matrix} \right] , \\
     \Sigma &=\left[\begin{matrix}  0 & I \\ I & 0 \end{matrix}\right], \quad 
        R= \left[\begin{matrix}
         0 & -P_2^{-1} \\
         P_2^{-1} & P_2^{-1} P_1 P_2^{-1}
     \end{matrix} \right], 
     \end{align*}
see \cite{AugJac:14,Aug:16}.
Furthermore, by Theorem \ref{thm:classical_solution}, for smooth input and initial conditions, the system \eqref{eqn:system1} has classical solutions over the time interval $[0,T]$, $T>0$. Additionally, the output $y$ is well-defined and continuous on this interval. Well-posedness, on the other hand, not only guarantees the existence of mild solutions but also ensures that the output function $y$ satisfies $y\in \mathrm L^2((0,T);\mathbb F^m)$ for arbitrary initial conditions $x_0\in X$ and inputs $u \in \mathrm L^2((0,T);\mathbb F^m)$.  
\end{remark}

%\medskip
In what follows, we define 
\begin{equation}\label{def:A_0}
\begin{aligned}
    \mathcal A_{e} x &= \left( P_2 \frac{\partial^2}{\partial\zeta^2}  + P_1 \frac{\partial}{\partial \zeta} + P_0 \right) \mathcal H x, \\
    \mathcal B_{e} x &= \frac{1}{\sqrt{2}} \begin{bmatrix} 
            P_2 \frac{\partial}{\partial \zeta} (\mathcal{H} x)(1) + \tfrac{1}{2} P_1 (\mathcal{H} x)(1) \\  
           P_2 \frac{\partial}{\partial \zeta} (\mathcal{H} x)(0) + \frac{1}{2} P_1 (\mathcal{H} x)(0) 
     \end{bmatrix}, \\
    \mathcal C_{e} x &= \frac{1}{\sqrt{2}} 
    \begin{bmatrix} 
        (\mathcal{H}x)(1) \\  
        -(\mathcal{H}x)(0) 
    \end{bmatrix} , \\
    D(\mathcal A_{e}) &= \left\{ x \in \mathrm L^2((0,1);\mathbb F^n) \, | \, \mathcal H x \in \mathrm H^2((0,1);\mathbb F^n) \right\}, \\
    D(\mathcal B_{e}) &= D(\mathcal A_{e}). \\
\end{aligned}
\end{equation}
We now proceed with the analysis of the well-posedness of the system \eqref{eqn:system1}. As an initial step, we state the following proposition. 
\begin{thm}\label{prop0} 
    With the operators in \eqref{def:A_0}, consider the boundary control and observation system 
    \begin{equation}\label{internal_sys}
    \begin{aligned}
        \dot{x}(t) &= \mathcal A_{e} x(t) , \quad x(0)=x_{0}, \\
         u_{e}(t) &= \mathcal B_{e} x(t) , \\
         y_{e}(t) &= \mathcal C_{e} x(t) , \quad t>0
    \end{aligned}
    \end{equation}
  with the state $x$, the input $u_{e}$ and the output $y_{e}$. We assume that the system satisfies the conditions of Assumption \ref{assumption}. Then, the system \eqref{internal_sys} is a well-posed impedance passive system and is regular. Moreover, for some $\alpha>0$ and $c>0$, the transfer function $\G_{e}(s)$ satisfies 
  \begin{equation*}
    \|\G_{e}(s) \|_{\mathcal L(U)} \leq \frac{c}{\sqrt{\Re(s)}}  , \quad s \in \mathbb C^{+}_{\alpha}.
\end{equation*} 
Furthermore, if $P_0(\zeta)^\ast = -P_0(\zeta)$ for a.e. $\zeta \in (0,1)$, then the system \eqref{internal_sys} is impedance energy-preserving.
\end{thm} 
\begin{proof}
Using integration by parts, $P_2^\ast=-P_2$, $P_1^\ast = P_1$ and $\mathcal H(\zeta)^\ast=\mathcal H(\zeta)$ for all $\zeta \in [0,1]$, we obtain that for $x \in D(\mathcal A_{e})$ 
\begin{align*} 
\Re\left\langle \A_{e} x, x\right\rangle_X  
    &= \Re \left\langle P_2 (\mathcal H x)'' + P_1 (\mathcal H x)' + P_0 \mathcal H x, x\right\rangle_X \\
    &= \frac{1}{2} \Re \left\langle P_2 (\mathcal H x)'', \mathcal H x \right\rangle_{\mathrm{L}^2} + \frac{1}{2}  \Re \left\langle P_1 (\mathcal H x)', \mathcal H x \right\rangle_{\mathrm{L}^2} + \frac{1}{2} \left\langle P_0 \mathcal H x, \mathcal H x \right\rangle_{\mathrm L^2} .
\end{align*}
By integration by parts and using that $P_{2}$ is skew-adjoint, we have 
\begin{align*}
    2 \Re \left\langle P_2 (\mathcal H x)'', \mathcal H x \right\rangle_{\mathrm{L}^2}
    &= \int_{0}^{1} (\mathcal H x)(\zeta)^\ast P_{2} (\mathcal H x)''(\zeta) d\zeta  +  \int_{0}^{1} \left(P_{2} (\mathcal H x)''(\zeta)\right)^\ast (\mathcal H x)(\zeta) d\zeta \\
    &= 2\Re \Big[ (\mathcal H x)(\zeta)^\ast P_2 (\mathcal H x)'(\zeta) \Big]_{0}^{1} - \int_{0}^{1} (\mathcal H x)'(\zeta)^\ast P_{2} (\mathcal H x)'(\zeta) d\zeta \\
    &\qquad - \int_{0}^{1} \left( P_{2}(\mathcal H x)'(\zeta)\right)^\ast (\mathcal H x)'(\zeta) d\zeta \\
    &= 2\Re \Big[ (\mathcal H x)(\zeta)^\ast P_2 (\mathcal H x)'(\zeta) \Big]_{0}^{1} - \int_{0}^{1} (\mathcal H x)'(\zeta)^\ast P_{2} (\mathcal H x)'(\zeta) d\zeta \\
    &\qquad + \int_{0}^{1} (\mathcal H x)'(\zeta)^\ast  P_{2}(\mathcal H x)'(\zeta) d\zeta \\
    &= 2\Re \Big[ (\mathcal H x)(\zeta)^\ast P_2 (\mathcal H x)'(\zeta) \Big]_{0}^{1} .
\end{align*}
Similarly, using that $P_{1}$ is self-adjoint we have
\begin{align*}
    \Re \left\langle P_1 (\mathcal H x)', \mathcal H x \right\rangle_{\mathrm{L}^2}
    &= \Re \Big[ (\mathcal H x)(\zeta)^\ast P_{1} (\mathcal H x)(\zeta) \Big]_{0}^{1} - \Re\left[\int_{0}^{1} (\mathcal H x)'(\zeta)^\ast P_{1} (\mathcal H x)'(\zeta) d\zeta \right] \\
    &= \Re \Big[ (\mathcal H x)(\zeta)^\ast P_{1} (\mathcal H x)(\zeta) \Big]_{0}^{1} - \Re \left\langle P_1 (\mathcal H x)', \mathcal H x \right\rangle_{\mathrm{L}^2} .
\end{align*}
Thus,
\begin{equation*}
    \Re \left\langle P_1 (\mathcal H x)', \mathcal H x \right\rangle_{\mathrm{L}^2} = \frac{1}{2} \Re \Big[ (\mathcal H x)(\zeta)^\ast P_{1} (\mathcal H x)(\zeta) \Big]_{0}^{1} ,
\end{equation*}
and then 
\begin{align*}
     \Re\left\langle \A_{e} x, x\right\rangle_X
    &= \frac{1}{2} \Re \Bigg[ (\mathcal H x)(\zeta)^\ast \left(P_2 (\mathcal H x)'(\zeta) + \frac{1}{2} P_1 (\mathcal H x)(\zeta) \right) \Bigg]_0^1 + \frac{1}{2} \left\langle P_0 \mathcal H x, \mathcal H x \right\rangle_{\mathrm L^2} \\
    &= \Re \left\langle u_{e}, y_{e} \right\rangle_{\mathbb F^{2n}} + \frac{1}{2} \left\langle P_0 \mathcal H x, \mathcal H x \right\rangle_{\mathrm L^2} .
\end{align*}
We see that if $P_0(\zeta)^\ast = -P_0(\zeta)$ for a.e. $\zeta \in (0,1)$, then $\left\langle P_0 \mathcal H x, \mathcal H x \right\rangle_{\mathrm L^2} =0$. Thus, the system is impedance energy-preserving. Nonetheless, as $P_0 \mathcal H$, seen as a multiplication operator on $X$, is a bounded operator on the state space $X$, the well-posedness and regularity remains unchanged under bounded perturbations, see Lemma \ref{lem:perturbation}. Moreover, the feedthrough term will remain the same. Thus, without loss of generality, we assume that $P_0=0$. Therefore, the system is impedance passive. By Remark \ref{remark1}, $A_{e} \coloneqq\mathcal A_{e\mid \ker \BB_0}$ generates a contraction semigroup on $X$. Next, we show that the system is well-posed. Since $A_{e}$ generates a contraction semigroup, by Theorem \ref{thm:well-posedness} it remains to prove that the transfer function of the system is bounded on some vertical line in the open right half-plane. As both $u_{e}$ and $y_{e}$ in \eqref{def:A_0} contain the common factor $\frac{1}{\sqrt{2}}$, we may, without loss of generality, remove this factor, as boundedness is preserved under multiplication by a nonzero constant. \\
Let $s=r+i\omega \in \mathbb{C}$, $r>1$ and $u_{e,0} \in \mathbb F^{2n}$. By $x_0$ we denote the solution of the ordinary differential equation
\begin{equation}\label{ODE}
  s x_0(\zeta) =  P_2 (\mathcal{H} x_0)''(\zeta) +  P_1 (\mathcal{H} x_0)'(\zeta) ,
\end{equation}
with the input $u_{e,0}$ 
\begin{equation}\label{u_e,0}
 u_{e,0} = \begin{bmatrix} 
  P_2 (\mathcal{H} x_0)'(1) + \frac{1}{2} P_1 (\mathcal{H} x_0)(1) \\  
 P_2 (\mathcal{H} x_0)'(0) + \frac{1}{2} P_1 (\mathcal{H} x_0)(0)  
 \end{bmatrix} = \begin{bmatrix}
     u_{e,0}^{1} \\
     u_{e,0}^{2}
 \end{bmatrix} , 
\end{equation}
and we define the output $y_{e,0}$ by
\begin{equation}\label{y_e,0}
  y_{e,0} \coloneqq \begin{bmatrix} 
  (\mathcal{H} x_0)(1) \\  
  -(\mathcal{H} x_0)(0) 
  \end{bmatrix} = \begin{bmatrix}
     y_{e,0}^{1} \\
     y_{e,0}^{2}
 \end{bmatrix} .
\end{equation}
Since for every $\zeta \in [0,1]$, $mI \le \mathcal{H}(\zeta) \le MI $, the norms $\|\mathcal H(\zeta) \cdot\|$, $\|\mathcal H^{\frac{1}{2}}(\zeta)\cdot\|$ and the Euclidean norm on $\mathbb F^{n}$ are equivalent and the equivalence constant can be chosen independent of $\zeta$. Using this and the Fundamental Theorem of Calculus, we have
\begin{align*}
\|y_{e,0}\|^{2} 
 &=  \|(\mathcal{H}x_0)(1) \|^2 + \|(\mathcal{H}x_0)(0)\|^2 \\
   &\lesssim x_0(1)^\ast \mathcal{H}(1) x_0(1) + x_0(0)^\ast \mathcal{H}(0) x_0(0) \\
  &=\ \int_0^1 \frac{d}{d\zeta} \left( (2\zeta-1) x_0(\zeta)^\ast \mathcal{H}(\zeta) x_0(\zeta) \right) d\zeta  \\ 
  &=\ 2\int_0^1 x_0(\zeta)^\ast \mathcal{H}(\zeta) x_0(\zeta) d\zeta  + \int_0^1 (2\zeta-1) \left((\mathcal{H}x_0)'(\zeta)\right)^\ast x_0(\zeta) d\zeta \\
  &\qquad \quad + \int_0^1 (2\zeta-1) (\mathcal{H}x_0)(\zeta)^\ast x_0(\zeta)'  d\zeta \\
  &=\ 4\|x_0\|_X^2  + \int_0^1 (2\zeta-1) \left[ \left( (\mathcal{H}x_0)'(\zeta)\right)^\ast x_0(\zeta) + x_0(\zeta)^\ast (\mathcal{H}x_0)'(\zeta) \right] d\zeta \\
  & \qquad \quad -\int_0^1 (2\zeta-1) x_0(\zeta)^\ast \mathcal{H}(\zeta)' x_0(\zeta) d\zeta \\
  &=\ 4\|x_0\|_X^2 - \int_0^1 (2\zeta-1) x_0(\zeta)^\ast \mathcal{H}(\zeta)' x_0(\zeta) d\zeta  \\
  &\qquad \quad + 2 \int_0^1 (2\zeta-1) \Re  \left[ x_0(\zeta)^\ast (\mathcal{H}x_0)'(\zeta)\right] d\zeta .
\end{align*}
Since $\mathcal H \in C^1([0,1];\mathbb F^n)$, we obtain
$$ \int_0^1 x_0(\zeta)^\ast \mathcal{H}(\zeta)' x_0(\zeta) d\zeta \lesssim \|x_{0}\|^{2}_{\mathrm L^{2}} . $$
Hence,
\begin{equation}\label{eqn:lem1}
    \|y_{e,0}\|^2 
    \lesssim \|x_0\|_X^2  + 2\int_0^1 (2\zeta-1) \Re  \left[ x_0(\zeta)^\ast (\mathcal{H}x_0)'(\zeta)\right]  d\zeta.
\end{equation}
For simplicity, we call $h_0=\mathcal{H} x_0$. Using equation \eqref{ODE}, we calculate 
\begin{align*}
    h_0'(\zeta)^\ast & x_0(\zeta) + x_0(\zeta)^\ast h_0'(\zeta) \\
     &= \frac{1}{s} h_0'(\zeta)^\ast \left( P_2 h_0''(\zeta) + P_1 h_0'(\zeta) \right) + \frac{1}{\overline{s}}\left( P_2h_0''(\zeta) + P_1 h_0'(\zeta) \right)^\ast h_0'(\zeta)  \\
    &= \frac{\overline{s}}{|s|^2} h_0'(\zeta)^\ast \left( P_2 h_0''(\zeta) + P_1 h_0'(\zeta) \right) +  \frac{s}{|s|^2}\left( P_2h_0''(\zeta) + P_1 h_0'(\zeta) \right)^\ast h_0'(\zeta)  \\
    &= \frac{r}{|s|^2} \left[h_0'(\zeta)^\ast \left( P_2 h_0''(\zeta) + P_1 h_0'(\zeta) \right) + \left( P_2h_0''(\zeta) + P_1 h_0'(\zeta) \right)^\ast h_0'(\zeta) \right] \\ 
    & \qquad - \frac{i\omega}{|s|^2} \left[h_0'(\zeta)^\ast \left( P_2 h_0''(\zeta) + P_1 h_0'(\zeta) \right) -  \left( P_2h_0''(\zeta) + P_1 h_0'(\zeta) \right)^\ast h_0'(\zeta) \right]  \\
    &= \frac{r}{|s|^2} \left[h_0'(\zeta)^\ast s x_0(\zeta) + \overline{s} x_0(\zeta)^\ast h_0'(\zeta) \right]  - \frac{2i\omega}{|s|^2} \Re \left[h_0'(\zeta)^\ast P_2 h_0''(\zeta) \right]  \\  
     &= \frac{2r}{|s|^2} \Re \left[ s h_0'(\zeta)^\ast x_0(\zeta) \right] -\frac{i\omega}{|s|^2} \left( \left (h_0'(\zeta)\right)^\ast P_2 h_0'(\zeta) \right)' .
\end{align*}
Substituting the previous expression into \eqref{eqn:lem1} and using Cauchy-Schwarz inequality, the inequality \eqref{eqn:lem1} becomes
\begin{align*}
\|y_{e,0}\|^{2}  
& \lesssim \|x_0\|_X^2  + 2\int_0^1 (2\zeta-1) \Re  \left[ x_0(\zeta)^\ast h_0'(\zeta)\right]  d\zeta\\
 &  =  \|x_0\|_X^2  +  \frac{2r}{|s|^2} \int_0^1 (2\zeta-1) \Re \left[ s h_0'(\zeta)^\ast x_0(\zeta) \right]d\zeta \\
 &\qquad -\frac{i\omega}{|s|^2}  \int_0^1 (2\zeta-1) \left[ h_0'(\zeta)^\ast P_2  h_0'(\zeta) \right]' d\zeta  \\
    &\lesssim  \|x_0\|_X^2  +\frac{r|s|}{|s|^2} \int_0^1 \| h_0'(\zeta)\| \|x_0(\zeta)\| d\zeta - \frac{i\omega}{|s|^2}  \int_0^1 (2\zeta-1) \left[ h_0'(\zeta)^\ast P_2  h_0'(\zeta) \right]' d\zeta \\
    &\lesssim  \|x_0\|_X^2  +\frac{r}{|s|}\| h_0'\|_\mathrm{L^2} \|x_0\|_{\mathrm{L^2}} -\frac{i\omega}{|s|^2} \Big[(2\zeta-1) h_0'(\zeta)^\ast P_2  h_0'(\zeta) \Big]_0^1 \\ 
    &\qquad 
    +\frac{2i\omega}{|s|^2} \int_0^1 h_0'(\zeta)^\ast P_2  h_0'(\zeta) d\zeta .
\end{align*}
Using Cauchy-Schwarz inequality and the fact that the norms $\|\cdot\|_{\mathrm L^2}$ and $\|\cdot\|_X$ are equivalent, we obtain that
\begin{equation}\label{eqn:y1}
    \|y_{e,0}\|^{2} 
    \lesssim \|x_0\|_X^2 +\frac{r}{|s|}\| h_0'\|_{\mathrm{L^2}} \|x_0\|_{X} -\frac{i\omega}{|s|^2} \Big[(2\zeta-1) h_0'(\zeta)^\ast P_2  h_0'(\zeta) \Big]_0^1 +\frac{1}{|s|}\| h_0'\|_{\mathrm{L^2}}^2 .
\end{equation}
On the other hand, using the equalities \eqref{u_e,0} and \eqref{y_e,0}, together with the invertibility of the matrix $P_2$, we have that 
\begin{align*}
    \Big[ (2 &\zeta -1) h_0'(\zeta)^\ast P_2 h_0'(\zeta) \Big]_0^1 \\
    &= h_0'(1)^\ast P_2 h_0'(1) + h_0'(0)^\ast P_2 h_0'(0) \\
    &= \left[ \frac{1}{2} P_1 h_0(1) - P_2 h_0'(1) - \frac{1}{2} P_1 h_0(1) \right]^\ast P_2^{-1} \left[ \frac{1}{2} P_1 h_0(1) + P_2 h_0'(1) - \frac{1}{2} P_1 h_0(1) \right] \\
    &\quad + \left[ \frac{1}{2} P_1 h_0(0) - P_2 h_0'(0) - \frac{1}{2} P_1 h_0(0) \right]^\ast P_2^{-1} \left[ \frac{1}{2} P_1 h_0(0) + P_2 h_0'(0) - \frac{1}{2} P_1 h_0(0) \right] \\
    &= \begin{bmatrix}
        -u_{e,0}^{1} + P_1 y_{e,0}^{1} \\
        - u_{e,0}^{2} - P_1 y_{e,0}^{2}
    \end{bmatrix}^\ast \begin{bmatrix}
        P_2^{-1} & 0 \\
        0 & P_2^{-1}
    \end{bmatrix} \begin{bmatrix}
        u_{e,0}^{1} - P_1 y_{e,0}^{1} \\
        u_{e,0}^{2} + P_1 y_{e,0}^{2}
    \end{bmatrix},
\end{align*}
Hence,
\begin{equation*}
    \left\| \Big[ (2\zeta-1) h_0'(\zeta)^\ast P_2 h_0'(\zeta) \Big]_0^1 \right\| 
    \lesssim \|u_{e,0}\|^2 + \|u_{e,0}\| \|y_{e,0}\| + \|y_{e,0}\|^2 .
\end{equation*}
Thus, the inequality \eqref{eqn:y1} becomes
\begin{equation}\label{eqn:y2}
\begin{aligned}
    \|y_{e,0}\|^{2}  
    &\lesssim \|x_0\|_X^2  +\frac{r}{|s|}\| h_0'\|_{\mathrm{L^2}} \|x_0\|_{X} + \frac{1}{|s|} \|u_{e,0}\|^2 + \frac{1}{|s|} \|u_{e,0}\| \|y_{e,0}\| \\
    & \qquad  + \frac{1}{|s|}\|y_{e,0}\|^2 +\frac{1}{|s|}\| h_0'\|_{\mathrm{L^2}}^2 .  
\end{aligned}
\end{equation}
In order to get an estimate of the transfer function $\G_{e}(s)$, we need to evaluate the norm $\| h_0'\|_{\mathrm{L^2}}$. To this end, we calculate
\begin{align*}
\bigg[ \bigg(P_2 & h_0'(\zeta) + \frac{1}{2} P_1 h_0(\zeta) \bigg)^\ast P_2 h_0(\zeta) \bigg]' \\
  & = \left( P_2  h_0''(\zeta) + \frac{1}{2} P_1  h_0'(\zeta) \right)^\ast P_2  h_0(\zeta) + \left( P_2  h_0'(\zeta) + \frac{1}{2} P_1  h_0(\zeta) \right)^\ast P_2 h_0'(\zeta). 
 \intertext{Plugging in the first term of equation the equation \eqref{ODE}, we get} 
  &= \left( s x_0(\zeta) - \frac{1}{2} P_1  h_0'(\zeta) \right)^\ast P_2  h_0(\zeta) + \|P_2 h_0'(\zeta)\|^2 + \frac{1}{2} h_0(\zeta)^\ast P_1 P_2 h_0'(\zeta) \\
  &= \overline{s} x_0(\zeta)^\ast P_2 h_0(\zeta) - \frac{1}{2} h_0'(\zeta)^\ast P_1 P_2 h_0(\zeta) - \frac{1}{2} \overline{h_0'(\zeta)^\ast P_2 P_1 h_0(\zeta)} + \|P_2 h_0'(\zeta)\|^2 .
\end{align*}
This implies
\begin{equation}\label{eqn:1}
\begin{aligned}
2 \Re &\left[ \left(P_2  h_0'(\zeta) + \frac{1}{2} P_1  h_0(\zeta) \right)^\ast P_2 h_0(\zeta)\right]' \\
     &= 2 \Re \left[\overline{s} x_0(\zeta)^\ast P_2 h_0(\zeta)\right] + 2 \|P_2 h_0'(\zeta)\|^2  - h_0'(\zeta)^\ast \left(P_1 P_2 + P_2 P_1\right) h_0(\zeta) . 
\end{aligned}
\end{equation}
Since $P_2$ is invertible, we have that the norm $\|P_2 \cdot\|$ is equivalent to the Euclidean norm $\|\cdot \|$ on $\mathbb F^{n}$. Therefore, with \eqref{eqn:1} we have that
\begin{align*}
\int_0^1 \| h_0'(\zeta) \|^2 d\zeta 
    &\lesssim \int_0^1 \| P_2 h_0'(\zeta)\|^2 d\zeta \\
    &=  \Re  \left[\left(P_2  h_0'(\zeta) + \frac{1}{2} P_1  h_0(\zeta) \right)^\ast P_2 h_0(\zeta) \right]_0^1 \\
    &\qquad \quad - \int_0^1 \mathrm{Re}\Big[\overline{s} x_0(\zeta)^\ast P_2 \mathcal H(\zeta) x_0(\zeta)\Big] d\zeta \\
    &\qquad \quad + \frac{1}{2} \int_0^1 h_0(\zeta)^\ast \left(P_1 P_2 + P_2 P_1 \right) h_0'(\zeta) d\zeta .
\end{align*}
By virtue of $\mathcal H \in C^1([0,1];\mathbb F^{n\times n})$, equation \eqref{u_e,0}, and the Cauchy-Schwarz inequality, we obtain that
\begin{equation*}
     \| h_0' \|_{\mathrm L^2} 
    \lesssim\  |s|\|x_0\|_{X}^2  + \|x_0\|_{X} \| h_0'\|_{\mathrm{L^2}} + \|u_{e,0}\| \|y_{e,0}\| .
\end{equation*}
Therefore, we obtain that for some constant $c>0$ 
\begin{equation*}
    \| h_0'\|_{\mathrm{L}^2}^2 - c\|x_0\|_X \| h_0'\|_{\mathrm{L}^2} - c|s|\|x_0\|^2_X - c\|u_{e,0}\| \|y_{e,0}\| \leq 0 .
\end{equation*}
This implies
\begin{equation}\label{eqn:lem2}
     \| h_0'\|_{\mathrm{L}^2} 
    \leq c\|x_0\|_X + \sqrt{c^2\|x_0\|_X^2 + 4c \left(|s| \|x_0\|_X^2 + \|u_{e,0}\| \|y_{e,0}\| \right)} .
\end{equation}
Since the system is impedance passive, there holds by Corollary \ref{cor:passive}
$$ \|x_0\|_X^2 \lesssim \frac{1}{r}  \|u_{e,0}\|  \|y_{e,0}\|. $$
Thus, for $|s|>1$, the inequality \eqref{eqn:lem2} becomes 
\begin{align*}
    \| h_0'\|_{\mathrm{L}^2} 
   &\lesssim \frac{1}{\sqrt{r}}\sqrt{\|u_{e,0}\|} \sqrt{\|y_{e,0}\|}   + \sqrt{ \frac{ |s|}{r}\|u_{e,0}\| \|y_{e,0}\| +\|u_{e,0}\| \|y_{e,0}\|  } \\
    &\lesssim \frac{\sqrt{|s|}}{\sqrt{r}} \sqrt{\|u_{e,0}\|} \sqrt{\|y_{e,0}\|}. 
\end{align*}
From inequality \eqref{eqn:y2}, we recall that
\begin{align*}
    \|y_{e,0}\|^2 
    &\lesssim  \|x_0\|_X^2  +\frac{r}{|s|}\| h_0'\|_{\mathrm{L^2}} \|x_0\|_{X} + \frac{1}{|s|} \|u_{e,0}\|^2 + \frac{1}{|s|} \|u_{e,0}\| \|y_{e,0}\| \\
    & \qquad \quad + \frac{1}{|s|}\|y_{e,0}\|^2 +\frac{1}{|s|}\| h_0'\|_{\mathrm{L^2}}^2 .
\end{align*}
Using again the impedance passivity of the system and the earlier estimate of $ \| h_0'\|_{\mathrm{L}^2}$, we deduce that
\begin{align*} 
\|y_{e,0}\|^2
    &\lesssim \frac{1}{r} \|y_{e,0}\| \|u_{e,0}\| +\frac{r}{|s|} \frac{1}{\sqrt{r}} \frac{\sqrt{|s|}}{\sqrt{r}} \|u_{e,0}\|\|y_{e,0}\| +\frac{1}{r} \|u_{e,0}\|^2 + \frac{1}{r}\|y_{e,0}\|^2 \\
    & \qquad \quad +\frac{1}{|s|} \frac{|s|}{r} \|u_{e,0}\| \|y_{e,0}\|  \\
     &\lesssim \left(\frac{2}{r} + \frac{1}{\sqrt{|s|}} \right) \|u_{e,0}\| \|y_{e,0}\| + \frac{1}{r} \|u_{e,0}\|^2 + \frac{1}{r}\|y_{e,0}\|^2 .
\end{align*}
Therefore, for some constant $c>0$ (independent of $r$) we have
$$ \left(1-\frac{c}{r}\right) \|y_{e,0}\|^2 - \left(\frac{2c}{r} + \frac{c}{\sqrt{|s|}} \right) \|u_{e,0}\| \|y_{e,0}\| - \frac{c}{r} \|u_{e,0}\|^2 \leq 0. $$
For $r>c$, this implies
\begin{equation*}
    \|y_{e,0}\| 
    \leq\ \frac{1}{2\left(1 -\frac{c}{r} \right)} \sqrt{c^2\left(\frac{2}{r}+\frac{1}{\sqrt{|s|}}\right)^2 + 4 \frac{c}{r}\left(1 -\frac{c}{r} \right)} \|u_{e,0}\| + \frac{ \left( \frac{2}{r}+\frac{1}{\sqrt{|s|}} \right)c }{2(1 -\frac{c}{r})} \|u_{e,0}\|.
\end{equation*}
Therefore, for $r \gg 1$, we obtain that
\begin{equation}
     \|y_{e,0}\| \lesssim\ \frac{1}{\sqrt{r}} \|u_{e,0}\|.
\end{equation}
We have shown that for $\Re(s)$ sufficiently large the transfer function satisfies the inequality 
\begin{equation}\label{eqn:transfer_fct_inequality}
    \|\G_{e}(s) \|_{\mathcal L(U)} \leq \frac{c}{\sqrt{r}}  , \quad c>0 
\end{equation} 
and therefore, it is bounded on a vertical line in the open right half-plane $ \mathbb C^{+}_{r_0}$
for some $r_0>0$. By Theorem \ref{thm:well-posedness}, the system \eqref{internal_sys} is well-posed. Furthermore, by \eqref{eqn:transfer_fct_inequality} we obtain that
\begin{equation*}
    \lim_{\Re(s) \to \infty} \G_{e}(s) = 0,
\end{equation*}
which  implies that the system is regular with feedthrough term  is zero.
\end{proof}

\medskip
Using the input $u_{e}$ and the output $y_{e}$ defined in Theorem \ref{prop0}, see \eqref{def:A_0} and \eqref{internal_sys}, we may rewrite the system \eqref{eqn:system1}. We note that $\begin{bsmallmatrix}
    u_{e} \\ y_{e}
\end{bsmallmatrix}$ is obtained from $\tau(\mathcal{H}x)$ through an invertible linear transformation.
Hence $\begin{bsmallmatrix}
    u_{e} \\ y_{e}
\end{bsmallmatrix}$ span the same range as $\tau(\mathcal{H}x)$. Therefore, $u=W_{B,1}\tau(\mathcal{H}x)$, $0= W_{B,2}\tau(\mathcal{H}x)$ and $y= W_{C}\tau(\mathcal{H}x)$ can equivalently be expressed as linear combinations of $u_{e}$ and $y_{e}$, and there exist uniquely determined matrices $K_1$, $K_2 \in \mathbb F^{(2n-m) \times 2n}$ and $B_1$, $B_2$, $C_1$, $C_2\in \mathbb F^{m\times 2n}$ such that the system \eqref{eqn:system1} can equivalently be described by 
\begin{align}
     \frac{\partial x}{\partial t}(\zeta,t) &= \left( P_2 \frac{\partial^2}{\partial \zeta^2} +  P_1 \frac{\partial}{\partial \zeta} + P_0(\zeta) \right)\mathcal{H}(\zeta) x(\zeta,t), \quad t>0, \quad \zeta \in [0,1] \\
     0 &= K_1 u_{e}(t) + K_2 y_{e}(t), \label{input0} \\
     u(t) &= B_1 u_{e}(t) + B_2 y_{e}(t), \label{input} \\
     y(t) &= C_1  u_{e}(t) + C_2  y_{e}(t). \label{output}
\end{align}

The following theorem provides an equivalent condition for the well-posedness of the system \eqref{eqn:system1} in the case $m=2n$. Note that if $m = 2n$, then $W_{B,2}=0$ and thus $K_1=K_2=0$. 
\begin{thm}\label{0main_result}
    Let $m=2n$. Then, the boundary control and observation system \eqref{eqn:system1} is well-posed if and only if the matrix $B_1$ in \eqref{input} is invertible. In this case, the system is regular and its feedthrough term is $C_{1} B_{1}^{-1}$. 
\end{thm}
\begin{proof}
If $B_1 \in \mathbb F^{2n \times 2n}$ is invertible, then the equation \eqref{input} may be reformulated as 
\begin{equation*}
        u_{e}(t) = B_1^{-1}  u(t) - B_1^{-1} B_2 y_{e}(t).
\end{equation*} 
Thus, the system \eqref{eqn:system1} is the system \eqref{internal_sys} closed via the output feedback defined by $-B_1^{-1} B_2$. 
From Theorem \ref{prop0}, we have $\displaystyle\lim_{\Re\, s \to \infty} \G_{e}(s)=0$, then, for $\Re(s)$ large enough we get that $I + B_1^{-1} B_2 \G_{e}(s)$ is invertible and its inverse exists and is bounded on a right half-plane. Therefore, the matrix $B_1^{-1} B_2$ defines an admissible feedback operator and thus by Theorem \ref{thm:feedback}, the closed loop system is a well-posed boundary control and observation system with a transfer function is given by 
\begin{equation*}
    \G(s) = \left( C_1 + C_2 \G_{e}(s) \right) B_{1}^{-1}\left( I + B_1^{-1} B_2 \G_{e}(s) \right)^{-1} , \quad s \in \mathbb C^{+}_{\alpha} 
\end{equation*}
for some $\alpha>0$. Furthermore, we see that it converges to $C_1 B_{1}^{-1}$, as $\Re(s) \to \infty$. 

%the second implication
Conversely, we assume that the system \eqref{eqn:system1} with $m=2n$ is well-posed and we show that $B_1$ is invertible. By assumption, the matrix
    $\begin{bsmallmatrix} 
    W_{B,1} \\ 
    W_C 
    \end{bsmallmatrix}$ is invertible. Hence, we conclude that 
    $\begin{bsmallmatrix} 
    B_1 & B_2 \\ 
    C_1 & C_2 
    \end{bsmallmatrix}$ 
    is invertible as well. 
As the system \eqref{internal_sys} is well-posed with a feedthrough $0$, we apply Lemma \ref{lem:invertibility} and obtain that $B_1$ is invertible. 
\end{proof}

\medskip
\medskip
If $0 < m \leq 2n$, the sufficient condition of Theorem \ref{0main_result} still holds, which will be shown in the following theorem. Further, we provide a necessary condition.
\begin{cor}\label{corollary1}
   The boundary control and observation system \eqref{eqn:system1} is well-posed and regular if the matrix 
   \begin{equation*}
       \begin{bmatrix}
        K_1 \\ 
        B_1
    \end{bmatrix} 
    = \begin{bmatrix}
        W_{B,1} \\
        W_{B,2}
    \end{bmatrix} P_{2}^{-1} \begin{bmatrix}
        0 & 0 \\
        I & 0 \\
        0 & 0 \\
        0 & I
    \end{bmatrix}
   \end{equation*} 
   in \eqref{input} is invertible. 
\end{cor}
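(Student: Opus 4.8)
The plan is to follow the strategy of the proof of Theorem~\ref{0main_result}, writing \eqref{eqn:system1} as the internally well-posed system \eqref{internal_sys} closed by an admissible output feedback, but now simultaneously absorbing the constraint \eqref{input0} into that feedback. Stacking \eqref{input0} on top of \eqref{input} gives $\begin{bsmallmatrix} 0 \\ u(t)\end{bsmallmatrix} = \begin{bsmallmatrix} K_1 \\ B_1\end{bsmallmatrix} u_e(t) + \begin{bsmallmatrix} K_2 \\ B_2\end{bsmallmatrix} y_e(t)$. Since $\begin{bsmallmatrix} K_1 \\ B_1\end{bsmallmatrix} \in \mathbb F^{2n\times 2n}$ is assumed invertible, I would write $\begin{bsmallmatrix} M_1 & M_2\end{bsmallmatrix} := \begin{bsmallmatrix} K_1 \\ B_1\end{bsmallmatrix}^{-1}$ (so that $M_2\in\mathbb F^{2n\times m}$) and $N := \begin{bsmallmatrix} K_1 \\ B_1\end{bsmallmatrix}^{-1}\begin{bsmallmatrix} K_2 \\ B_2\end{bsmallmatrix}$, obtaining $u_e(t) = M_2 u(t) - N y_e(t)$, and then, substituting into \eqref{output}, the output $y(t) = C_1 M_2\, u(t) + (C_2 - C_1 N) y_e(t)$. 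Setting $v := M_2 u$, this exhibits \eqref{eqn:system1} as \eqref{internal_sys} under the output feedback $v = \mathcal B_e x + N\mathcal C_e x$ (feedback operator $F = N$), with the new external input entering through the bounded map $u\mapsto M_2 u = v$ and the new output given by the bounded map $(v,y_e)\mapsto C_1 v + (C_2 - C_1 N) y_e$. I would also record that $\ker\BB = \{x \mid \mathcal Hx\in\mathrm H^2,\; K_1 u_e + K_2 y_e = 0,\; B_1 u_e + B_2 y_e = 0\} = \ker(\mathcal B_e + N\mathcal C_e)$, so that the generator of \eqref{eqn:system1} coincides with that of the feedback-closed system and, by Remark~\ref{rem:3.3}, \eqref{eqn:system1} is indeed a boundary control and observation system.

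Next I would invoke Proposition~\ref{prop0}: since $\lim_{\Re(s)\to\infty}\G_e(s) = 0$ (indeed $\|\G_e(s)\|\le c/\Re(s)$), there is $\alpha>0$ with $\|N\G_e(s)\| < 1$ for $s\in\mathbb C^+_\alpha$, so $I + N\G_e(s)$ (equivalently $I + \G_e(s) N$) is invertible with uniformly bounded inverse on $\mathbb C^+_\alpha$; hence $N$ is an admissible feedback operator, and Theorem~\ref{thm:feedback} yields that the closed-loop system $(\mathcal A_e,\,\mathcal B_e + N\mathcal C_e,\,\mathcal C_e)$, with input $v$ and output $y_e$, is well-posed. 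Note that impedance passivity of \eqref{eqn:system1} is \emph{not} assumed, so Theorem~\ref{thm:well-posedness} is not directly available; this is precisely why the feedback route is used.

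Well-posedness of \eqref{eqn:system1} would then follow by substituting $v = M_2 u$ and $y = C_1 v + (C_2 - C_1 N) y_e$ into the defining inequality \eqref{well-posed}: using only the boundedness of $M_2$, $C_1$ and $C_2 - C_1 N$ together with the well-posedness of the closed-loop system, a static reparametrisation of input and output cannot destroy \eqref{well-posed}. For regularity I would compute the transfer function via Theorem~\ref{thm:thm2.8}: for $s\in\mathbb C^+_\alpha$, solving $s x_0 = \mathcal A x_0$ with $y_{e,0} = \G_e(s) u_{e,0}$ and the constraint yields $\begin{bsmallmatrix} 0 \\ u_0\end{bsmallmatrix} = \begin{bsmallmatrix} K_1 \\ B_1\end{bsmallmatrix}\bigl(I + N\G_e(s)\bigr) u_{e,0}$, hence $u_{e,0} = \bigl(I + N\G_e(s)\bigr)^{-1} M_2 u_0$ and $\G(s) = \bigl(C_1 + C_2\G_e(s)\bigr)\bigl(I + N\G_e(s)\bigr)^{-1} M_2$, which is bounded on $\mathbb C^+_\alpha$ and converges to $C_1 M_2$ as $\Re(s)\to\infty$; so \eqref{eqn:system1} is regular with feedthrough $C_1 M_2$ (consistently with Theorem~\ref{0main_result}, in which $K_1$ is void and $M_2 = B_1^{-1}$).

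The only step genuinely going beyond Theorem~\ref{0main_result}, and the main point requiring care, is the bookkeeping that packages the constraint \eqref{input0} together with the injection of the external input $u$ into the single invertible matrix $\begin{bsmallmatrix} K_1 \\ B_1\end{bsmallmatrix}$: one must check that the feedback $N$ extracted in this way is admissible (this is exactly where the decay $\G_e(s)\to 0$ from Proposition~\ref{prop0} is used) and that shrinking the input space from $\mathbb F^{2n}$ to $\mathbb F^m$, at the cost of a feedthrough of the new input into the output, preserves both well-posedness and regularity. Both are routine once the reduction to a feedback of \eqref{internal_sys} is in place; the remainder is a direct transcription of the argument for Theorem~\ref{0main_result}.
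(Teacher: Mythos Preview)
Your argument is correct, but the paper takes a shorter route: rather than redoing the feedback construction with the constraint \eqref{input0} absorbed into $N$, it simply \emph{enlarges} the input space by declaring $v(t) := W_{B,2}\tau(\mathcal H x)(t)$ to be an additional input. The extended system then has the full $2n$ boundary inputs and falls directly under Theorem~\ref{0main_result} (its ``$B_1$'' is precisely $\begin{bsmallmatrix} K_1 \\ B_1\end{bsmallmatrix}$), so well-posedness follows in one line; setting $v\equiv 0$ recovers \eqref{eqn:system1}. Your approach is essentially the same reduction made explicit---you inline the feedback step from the proof of Theorem~\ref{0main_result} and carry the constraint through by hand---which costs more bookkeeping but has the advantage of yielding the explicit transfer function $\G(s) = (C_1 + C_2\G_e(s))(I + N\G_e(s))^{-1} M_2$ and feedthrough $C_1 M_2$, whereas the paper's treatment of regularity (``extending the output and using the previous result'') is left rather sketchy.
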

\begin{proof}
Let $\G(s)$ be the transfer function of the boundary control and observation system \eqref{eqn:system1} and suppose that the matrix $\begin{bsmallmatrix}
        K_1 \\ B_1
    \end{bsmallmatrix}$
in \eqref{input} is invertible.
By introducing an additional input $v(t)=W_{B,2} \mathcal H \tau(x)$ and using Theorem \ref{0main_result}, we obtain that the boundary control and observation system with the extended input $\tilde{u} :=\begin{bsmallmatrix}
        u \\ v
    \end{bsmallmatrix}$  
and the extended output $\tilde{y} :=\begin{bsmallmatrix}
    y \\ 
    \omega
\end{bsmallmatrix}$ 
is well-posed and regular. Moreover, its transfer function $\tilde{\G}(s)$ is given by
\begin{equation*}
    \tilde{\G}(s) \tilde{u} = \begin{bmatrix}
        \G(s) & \tilde{\G}_{12}(s) \\
         \tilde{\G}_{21}(s) & \tilde{\G}_{22}(s)
    \end{bmatrix} .
\end{equation*}
In particular, $\G(s)$ has a limit as $\Re(s) \to\infty$. Hence, by choosing $v \equiv 0$, we conclude that the boundary control and observation system \eqref{eqn:system1} is well-posed and regular if the matrix $\begin{bsmallmatrix}
        K_1 \\ B_1
    \end{bsmallmatrix}$
in \eqref{input} is invertible.
\end{proof}

\begin{remark}
    The matrix $\begin{bsmallmatrix}
        K_{1} \\ B_{1}
    \end{bsmallmatrix}$ is invertible if and only if the matrix $\begin{bsmallmatrix}
        W_{B,1} \\
        W_{B,2}
    \end{bsmallmatrix} \begin{bsmallmatrix}
        0 & 0 \\
        I & 0 \\
        0 & 0 \\
        0 & I
    \end{bsmallmatrix}$ is invertible, or equivalently, the matrix $\begin{bsmallmatrix}
        Q_{12} & Q_{14} \\
        Q_{22} & Q_{24}
    \end{bsmallmatrix}$ is invertible, where 
    \begin{equation*}
    \begin{bmatrix}
        W_{B,1} \\
        W_{B,2}
    \end{bmatrix} = \begin{bmatrix}
        Q_{11} & Q_{12} & Q_{13} & Q_{14} \\
        Q_{21} & Q_{22} & Q_{23} & Q_{24}
    \end{bmatrix} .
    \end{equation*}
    Note that the square matrices $Q_{12}$, $Q_{14}$, $Q_{22}$, $Q_{24}$ are the matrices that act only on $(\mathcal{H} x)'(1,t)$ and $(\mathcal{H} x)'(0,t)$.
\end{remark}

\section{Exact controllability and exact observability}\label{section4}
After characterizing the well-posedness of the class of boundary control and observation systems \eqref{eqn:system1} in the previous section, we now turn our attention to the property of exact controllability. Throughout this section we assume that $P_0(\zeta)^\ast = -P_0(\zeta)$ for a.e. $\zeta \in (0,1)$ and $m=2n$. Under this assumption, the system takes the form
\begin{equation}\label{eqn:system2}
\begin{aligned}
     \frac{\partial x}{\partial t}(\zeta,t) &= \left( P_2 \frac{\partial^2}{\partial \zeta^2} +  P_1 \frac{\partial}{\partial \zeta} + P_0(\zeta) \right)\mathcal{H}(\zeta) x(\zeta,t), \quad t>0, \quad \zeta \in [0,1] \\
     u(t) &= W_{B,1} \tau(\mathcal H x)(t) , \quad t>0\\
     y(t) &= W_C \tau (\mathcal H x)(t), \quad t>0 \\
    x(\zeta,0) &= x_0(\zeta), \quad \zeta \in [0,1].
\end{aligned}
\end{equation}
From this point onward, we define
\begin{equation*}
\begin{aligned}
    \mathcal{A} x &= \left( P_2 \frac{\partial^2}{\partial\zeta^2}  + P_1 \frac{\partial}{\partial \zeta} + P_0 \right) \mathcal H x, \\
    \BB x &= W_{B,1} \tau(\mathcal H x), \\
    \CC x &= W_{C} \tau(\mathcal H x), \\
    D(\mathcal{A}) &= \left\{ x \in \mathrm L^2((0,1);\mathbb F^n) \, | \, \mathcal H x \in \mathrm H^2((0,1);\mathbb F^n) \right\}, \\
     D(\BB) &= D(\A),
\end{aligned}
\end{equation*}
and we assume that the system \eqref{eqn:system2} is a well-posed boundary control and observation system and we aim to characterize the exact controllability and exact observability in finite time.

\begin{defn}\cite[Definition 6.2.1 and Definition 6.2.12]{CurtZwa:20}
The boundary control and observation system \eqref{BCS} is called exactly controllable (in finite time), if there exists a time $T>0$ such that for every $x_1 \in X$ there exists a control function 
    $u \in \mathrm{L^2}((0,T);\mathbb F^m)$ such that the corresponding mild solution $x$ with $x_0=0$ satisfies
   $$ x(0) = 0, \quad x(T) = x_1. $$
It is called exactly observable (in finite time) if there exists a time $T>0$ and a constant $m>0$ such that for every initial condition $x_0\in X$ the corresponding mild solution $(x,y)$ (with $u=0$) satisfies
\begin{equation*}
    \int_0^T \|y(t)\|^2 dt \ge m \|x_0\|^2.
\end{equation*}
\end{defn}

We start by providing the definition of optimizability.
\begin{defn}\cite[Definition 1.1]{Weiss_Rebarber:97}
    A boundary control and observation system  is called optimizable if for every initial condition $x_0 \in X$, there exists an input function $u \in \mathrm{L^2}((0,\infty);\mathbb F^m)$ such that the mild solution $x$ satisfies
    $$ \int_0^\infty \|x(t)\|^2 dt < \infty . $$
\end{defn}
Note that exact controllability implies optimizability and in \cite{Weiss_Rebarber:97}, a sufficient condition for exact controllability is presented in terms of optimizability.

\begin{prop}\label{prop:optimizability}\cite[Corollary 2.2]{Weiss_Rebarber:97}
    If the boundary control and observation system
    \eqref{BCS} is optimizable and $-A_0\coloneqq -\mathcal{A}_{\mid\ker \BB}$ generates a bounded $C_0$-semigroup, then the system \eqref{BCS} is exactly controllable.
\end{prop}

\medskip 
Next, we show that if the boundary control and observation system \eqref{eqn:system2} conserves energy over time, then the system is naturally exactly controllable and exactly observable. Before establishing this we first show that impedance energy-preservation is invariant under duality. Therefore, in the light of duality concept between controllability and observability \cite{tucsnak2009observation}, it suffices to prove exact controllability, as exact observability follows by duality. To this end, we state the following lemma.

\begin{lem}\label{lemma_adjoint}\cite[Theorem 3.11]{Villegas:07}
If the boundary control and observation system \eqref{eqn:system2} is impedance energy-preserving, then its dual system $(\A_{d}, \BB_{d}, \CC_{d})$ is also impedance energy-preserving. Moreover, $(\A_{d}, \BB_{d}, \CC_{d}) = (-\A, \CC, \BB)$. 
\end{lem}

\begin{lem}\label{lem:energy_preserving}
   Assume that the boundary control and observation system \eqref{eqn:system2} is impedance energy-preserving, then it is exactly controllable and exactly observable.
\end{lem}
\begin{proof}
    We consider the operator 
    \begin{align*} 
        A_0 x &= \left( P_2 \frac{\partial^2}{\partial \zeta^2} + P_1 \frac{\partial}{\partial \zeta} + P_0 \right) \mathcal H x, \\
        D(A_0) &= \left\{ x \in X \, \vert \ \mathcal H x \in \mathrm{H^2}((0,1);\mathbb F^n),\  W_{B,1} \tau(\mathcal H x)=0 \right\}.
    \end{align*}
    As the boundary control system is impedance energy-preserving, the operator $A_0\coloneqq \A_{\mid\ker \BB}$ generates a strongly continuous unitary group \cite{GorZwaMas:2005}. Thus, $-A_0$ generates a strongly continuous bounded semigroup. Therefore, by Proposition \ref{prop:optimizability} exact controllability is equivalent to optimizability. Accordingly, we show that the system can be exponentially stabilized by a negative output feedback, that is $u(t) = -k y(t),\, k>0$. Indeed, for $x_{0} \in D(\mathcal A)\cap \ker (\mathcal B +k \mathcal C)$
    \begin{equation}\label{eqn:4.3}
    \begin{aligned}
        \Re \left\langle \mathcal A x, x \right\rangle_{X} 
        &= \Re \left\langle \BB x, \CC x \right\rangle_{\mathbb R^{2n}} \\
        &= \Re \left\langle -k \CC x, \CC x \right\rangle_{\mathbb R^{2n}}  \\
        &= -k \left\|\CC x \right\|^{2}.
    \end{aligned}
    \end{equation}
    Since $W:=\begin{bsmallmatrix}
        W_{B,1} \\
        W_{C}
    \end{bsmallmatrix}$ 
    is invertible, it follows that $\|W v \|^{2} \ge m_{1} \|v\|^{2}$ for every $v \in \mathbb F^{n}$ and some $m_{1}>0$. so we have 
    \begin{align*}
        (1 +k) \|\CC x\|^{2} 
        &= \| \BB x\|^{2} + \|\CC x\|^{2} \\
        &= \left\| W \tau(\mathcal{H} x) \right\|^{2} \\
        &\ge m_{1} \|\tau(\mathcal{H} x)\|^{2} \\
        &\ge m_{1} \left( \|(\mathcal{H}x)(1)\|^{2} + \|(\mathcal{H}x)(0)\|^{2} + \|(\mathcal{H}x)'(1)\|^{2} \right) .
    \end{align*}
    Thus, by equation \eqref{eqn:4.3} we obtain that for $x_{0} \in D(\mathcal A)\cap \ker (\mathcal B +k \mathcal C)$
    \begin{align*}
        \langle \mathcal A x, x\rangle
        &= -k \|\CC x\|^{2} \\
        &\le - m_{2} \left( \|(\mathcal{H}x)(1)\|^{2} + \|(\mathcal{H}x)(0)\|^{2} + \|(\mathcal{H}x)'(1)\|^{2} \right),
    \end{align*}
    where $m_{2}=\frac{k m_{1}}{1+k}$. Thus, by Theorem 4.3.15 in \cite{Aug:16} we conclude that the semigroup $(T_{k}(t))$ generated by $\mathrm A_{k}$ is exponentially stable. This leads to a mild solution in $\mathrm{L^2}((0,\infty); X)$. Hence, the system is optimizable and thus it is exactly controllable. Finally, from Theorem 11.3.9 of \cite{Sta:05} we conclude that the system is also exactly observable.
\end{proof}

A key observation in this framework is the use of the following proposition, which states that given a well-posed boundary control and observation system, the exact controllability and exact observability remains unchanged under admissible feedback. 
\begin{prop}\label{lemma:feedback_exact_contr}\cite[Remark 6.9]{Wei:94}
    Let $F$ be an admissible feedback operator for the well-posed boundary control and observation system \eqref{eqn:system2}. Then the closed-loop boundary control and observation system generated by the operators $(\A, (\BB - F\CC), \CC)$ is exactly controllable (resp. exactly observable) if and only if the open-loop system generated by the operators $(\A, \BB, \CC)$ is exactly controllable (resp. exactly observable).
\end{prop}

We now state the main result concerning the controllability and observability properties of the boundary control system \eqref{eqn:system2}.
\begin{thm}\label{mainresult2}
     The well-posed boundary control and observation system \eqref{eqn:system2} is exactly controllable and exactly observable.
\end{thm}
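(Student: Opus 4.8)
The plan is to reduce the general well-posed system \eqref{eqn:system2} to the impedance energy-preserving case already handled by Lemma~\ref{lem:energy_preserving} and Lemma~\ref{lem_observ}, using the admissibility of output feedback together with Proposition~\ref{lemma:feedback_exact_contr}. Concretely, since \eqref{eqn:system2} is well-posed, its transfer function $\G$ is bounded on some right half-plane by Theorem~\ref{thm:well-posedness}; moreover by Proposition~\ref{prop0} the ``interior'' system \eqref{internal_sys} is well-posed and regular with feedthrough zero, and the reformulation \eqref{input}--\eqref{output} exhibits \eqref{eqn:system2} as \eqref{internal_sys} under a static reshuffling of the port variables. I would therefore look for an output feedback $u(t) = -F y(t) + \tilde u(t)$ (equivalently a feedback $F$ applied in $(\BB - F\CC)$) for which the closed-loop system is impedance energy-preserving, and then invoke Proposition~\ref{lemma:feedback_exact_contr} to transfer exact controllability and exact observability from the closed loop back to \eqref{eqn:system2}.

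**Key steps, in order.** First I would use Lemma~\ref{lem:decomposition_W} to write the combined boundary matrix $\tilde W_B = W_{B,1}\!\left[\begin{smallmatrix} R & I \\ -R & I\end{smallmatrix}\right]$ in the normalized form $S\begin{bmatrix} I+V & I-V\end{bmatrix}$ with $S$ invertible; here well-posedness is what guarantees the relevant matrices are in ``general position'' (via Corollary~\ref{corollary1} and Theorem~\ref{0main_result}, the invertibility of $B_1$), so $V$ is well-defined. Second, I would observe that because the system is well-posed and (by impedance passivity) $A_0$ is dissipative, we have $VV^\ast \le I$; the goal of the feedback is to replace the defect $I - VV^\ast \ge 0$ by zero. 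Third, I would exhibit an explicit static output feedback $F$ acting on the port variables $(u_e, y_e)$ — expressed through $S$, $V$, $R$ — whose effect on the boundary conditions is precisely to modify $V$ to a unitary $\tilde V$ with $\tilde V\tilde V^\ast = I$, so that by Remark~\ref{remark1} the closed-loop system $(\A, \BB - F\CC, \CC)$ is impedance energy-preserving. Fourth, I would check admissibility of $F$: since $\lim_{\Re s \to \infty}\G_e(s) = 0$ (Proposition~\ref{prop0}), the relevant operator $I + (\cdots)\G_e(s)$ (or $I + (\cdots)\G(s)$) is boundedly invertible on a right half-plane, which is exactly the hypothesis of Theorem~\ref{thm:feedback} and Proposition~\ref{lemma:feedback_exact_contr}. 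Fifth, apply Lemma~\ref{lem:energy_preserving} and Lemma~\ref{lem_observ} to the closed-loop system to get exact controllability and exact observability there, and finally Proposition~\ref{lemma:feedback_exact_contr} to push these properties back to the open-loop system \eqref{eqn:system2}.

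**Main obstacle.** The delicate point is constructing the feedback $F$ and verifying that it is both (a) genuinely static — a bounded operator from $Y$ to $U$ that does not disturb the domain $D(\A)$, which works here because $m = 2n$ forces $W_{B,2} = 0$ and hence $D(\A) = D(\A_e)$ is feedback-invariant — and (b) such that the closed-loop boundary conditions are impedance energy-preserving rather than merely impedance passive. In the normalized coordinates this amounts to: given $V$ with $VV^\ast \le I$, find an invertible matrix that moves $V$ to a contraction with $\tilde V\tilde V^\ast = I$ by a transformation realizable as output feedback $\BB \mapsto \BB - F\CC$ with $\CC = W_C\tau(\H x)$ fixed. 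One should be careful that the full-row-rank assumption on $\left[\begin{smallmatrix} W_{B,1}\\ W_C\end{smallmatrix}\right]$ (equivalently invertibility of $\left[\begin{smallmatrix} B_1 & B_2\\ C_1 & C_2\end{smallmatrix}\right]$) is what makes such an $F$ exist; a dimension/rank count using that $W_C$ has $2n$ rows and spans a complementary $2n$-dimensional piece of the $4n$-dimensional trace space is the crux. Once this linear-algebra lemma is in place, the rest is bookkeeping with the already-established machinery.
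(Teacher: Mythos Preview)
Your approach has a genuine gap: the output feedback $F$ you seek, making $(\A, \BB - F\CC, \CC)$ impedance energy-preserving with the \emph{given} output $\CC$, need not exist. The hypotheses of the theorem are only well-posedness, $m=2n$, and $P_0^\ast = -P_0$; impedance passivity of \eqref{eqn:system2} is not assumed, so your ``Second'' step ($VV^\ast \le I$) already oversteps. More fundamentally, the energy-preserving identity $\Re\langle (\BB - F\CC)x, \CC x\rangle = \Re\langle \A x, x\rangle$ imposes constraints involving $\CC$ that feedback on the input side cannot repair. For a concrete obstruction, take $B_1 = 2I$, $B_2 = 0$, $C_1 = 0$, $C_2 = I$ in \eqref{input}--\eqref{output}, i.e.\ $u = 2u_e$ and $y = y_e$. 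Then $B_1$ is invertible, so the system is well-posed by Theorem~\ref{0main_result}, and $\left[\begin{smallmatrix} B_1 & B_2 \\ C_1 & C_2\end{smallmatrix}\right]$ has full rank. Yet for any $F$ one has $\Re\langle u - Fy, y\rangle = 2\Re\langle u_e, y_e\rangle - \Re\langle F y_e, y_e\rangle$, which cannot equal $\Re\langle u_e, y_e\rangle$ for all $(u_e, y_e)$ since the coefficient in front of $\Re\langle u_e, y_e\rangle$ is $2$ rather than $1$. Thus no feedback of the form $\BB \mapsto \BB - F\CC$ yields an impedance energy-preserving system here, and your Steps~3--5 cannot be carried out.

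The paper avoids this by reversing the direction of the feedback argument. It does not try to turn \eqref{eqn:system2} into an energy-preserving system; instead it starts from the energy-preserving system $(\A_e, \BB_e, \CC_e)$ of \eqref{internal_sys}, which by Proposition~\ref{prop0} together with Lemmas~\ref{lem:energy_preserving} and~\ref{lem_observ} is exactly controllable and exactly observable. Well-posedness of \eqref{eqn:system2} gives $B_1$ invertible (Theorem~\ref{0main_result}), and then \eqref{eqn:system2} is, up to the invertible reparametrization $u \mapsto B_1^{-1}u$ of the input, precisely the closed loop of \eqref{internal_sys} under the feedback $-B_1^{-1}B_2$, which is admissible because the feedthrough of \eqref{internal_sys} is zero. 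Proposition~\ref{lemma:feedback_exact_contr} then transfers exact controllability and observability from \eqref{internal_sys} to \eqref{eqn:system2}. No construction of an $F$ tailored to $W_C$ is needed, and no passivity of the original system is invoked.
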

\begin{proof}
    By Theorem \ref{prop0}, the boundary control and observation system 
    \begin{equation}\label{eq:phs}
    \begin{aligned}
        \frac{\partial x}{\partial t}(\zeta,t) &= \left( P_2 \frac{\partial^2}{\partial \zeta^2} + P_1 \frac{\partial}{\partial \zeta} + P_0(\zeta) \right)\mathcal{H}(\zeta) x(\zeta,t), \quad \zeta \in [0,1] \\
        u_{e}(t) &= \frac{1}{\sqrt{2}}
        \begin{bmatrix}
            P_2 \frac{\partial}{\partial\zeta} (\mathcal{H}x)(1,t) + \tfrac{1}{2} P_1 (\mathcal{H}x)(1,t) \\  
            P_2 \frac{\partial}{\partial\zeta} (\mathcal{H}x)(0,t) + \tfrac{1}{2} P_1 (\mathcal{H}x)(0,t)
        \end{bmatrix}, \\
    y_{e}(t) &= \frac{1}{\sqrt{2}} 
    \begin{bmatrix}
        (\mathcal{H}x)(1,t)  \\  
        -(\mathcal{H}x)(0,t)
    \end{bmatrix},
    \end{aligned}
    \end{equation}
    is impedance energy-preserving. Then, it follows from Lemma \ref{lem:energy_preserving} that it is exactly controllable and exactly observable. Since the system \eqref{eqn:system2} is well-posed, then $B^{-1}$ is invertible. Using 
    \begin{align*}
         u(t) &=  B_1 u_{e}(t) + B_2 y_{e}(t), \\
          y(t) &= C_1  u_{e}(t) + C_2  y_{e}(t),
    \end{align*} 
    we see that the system \eqref{eqn:system2} is the closed-loop system of \eqref{eq:phs} via the feedback defined by $-B_1^{-1} B_2$. Since the feedthrough term of the system \eqref{eq:phs} is zero, then $B_1^{-1} B_2$ is an admissible feedback operator. Thus, from Lemma \ref{lemma:feedback_exact_contr}, we deduce that the closed-loop system, that is the system
    \begin{align*}
        \frac{\partial x}{\partial t}(\zeta,t) &= \left( P_2 \frac{\partial^2}{\partial \zeta^2} + P_1 \frac{\partial}{\partial \zeta} + P_0(\zeta) \right)\mathcal{H}(\zeta) x(\zeta,t), \quad t>0, \quad \zeta \in [0,1] \\
        u(t) &= W_{B,1} \tau(\mathcal H x), \quad t>0 \\
        y(t) &= W_C \tau(\mathcal H x), \quad t>0.
    \end{align*}
    is exactly controllable and exactly observable.
\end{proof}

\section{Examples}\label{section5}
In this section we apply our theory to two Euler-Bernoulli beam models
\subsection{Euler-Bernoulli beam with viscous air damping} 
We study an Euler-Bernoulli beam with viscous air damping. In this model, damping is assumed proportional to the transverse velocity, $\frac{\partial \omega}{\partial t}$ \cite{BanksInman_91}. In this case the vibrations of the beam are described by 
\begin{equation}\label{eq:EB_vis}
    \rho(\zeta) \frac{\partial^2 \omega}{\partial t^2}(\zeta,t) + \frac{\partial^2}{\partial \zeta^2}\left( EI(\zeta) \frac{\partial^2 \omega}{\partial \zeta^2}(\zeta,t)\right) + \gamma(\zeta) \frac{\partial \omega}{\partial t}(\zeta,t) =0, \quad t>0, \quad \zeta \in (0,1).
\end{equation}
Here, $\omega(\zeta,t)$ is the vertical displacement, $\rho(\zeta)$ denotes the mass density per unit length, $E I(\zeta) > 0$ is the flexural rigidity (or bending stiffness) of the beam while $E(\zeta)>0$ is the elasticity modulus and $I(\zeta)>0$ is the moment of inertia of the cross section. The non-negative coefficient $\gamma(\zeta)$ represents the viscous external damping.
Associated with \eqref{eq:EB_vis} we choose the following boundary conditions
\begin{align*}
    \frac{\partial^2 \omega}{\partial t \partial \zeta} (0,t) &= 0, \quad \frac{\partial^2 \omega}{\partial t \partial \zeta}(1,t) = 0, \\
     \left[ \frac{\partial}{\partial \zeta} \left( EI(\zeta) \frac{\partial^2 \omega}{\partial \zeta^2}(\zeta,t) \right) \right]_{\zeta=0} &=0, \quad  
     \left[ \frac{\partial}{\partial \zeta}\left(EI(\zeta) \frac{\partial^2 \omega}{\partial \zeta^2}(\zeta,t) \right) \right]_{\zeta=1} = u(t),
\end{align*}
and measurement of the output 
$$ y(t) = \frac{\partial \omega}{\partial t}(1,t) . $$
By defining $x=\begin{bsmallmatrix}
    \rho \frac{\partial \omega}{\partial t} \\ \frac{\partial^2 \omega}{\partial \zeta^2}
\end{bsmallmatrix}$, and choosing
$$ P_2= \begin{bmatrix}
    0 & -1 \\ 
    1 & 0
\end{bmatrix}, \quad P_1 =0, \quad P_0= \begin{bmatrix}
    -\gamma & 0 \\
    0 & 0 
\end{bmatrix}, \text{ and} \quad \mathcal{H}=\begin{bmatrix}
    \frac{1}{\rho} & 0 \\
    0 & E I
\end{bmatrix}, $$
the damped Euler-Bernoulli beam equations \eqref{eq:EB_vis} may be written as the class of system \eqref{eqn:system1} with the following boundary control and observation
\begin{align*}
    \begin{bmatrix}
        0 \\ I 
    \end{bmatrix} u(t) &= \begin{bmatrix}
        0 & 0 & 0 & 0 & 0 & 0 & 1 & 0 \\
        0 & 0 & 1 & 0 & 0 & 0 & 0 & 0 \\
        0 & 0 & 0 & 0 & 0 & 0 & 0 & 1 \\
        0 & 0 & 0 & 1 & 0 & 0 & 0 & 0 \\
    \end{bmatrix} \begin{bmatrix}
        (\mathcal H x)(1,t) \\ (\mathcal H x)'(1,t) \\ (\mathcal H x)(0,t) \\ (\mathcal H x)'(0,t)  
    \end{bmatrix}, \\
    y(t) &= \begin{bmatrix}
         1 & 0 & 0 & 0 & 0 & 0 & 0 & 0
    \end{bmatrix} \begin{bmatrix}
        (\mathcal H x)(1,t) \\ (\mathcal H x)'(1,t) \\ (\mathcal H x)(0,t) \\ (\mathcal H x)'(0,t) 
    \end{bmatrix}. 
\end{align*}
We have 
\begin{equation*}
    \begin{bmatrix}
    K_1 \\ B_1 
\end{bmatrix} = \begin{bmatrix}
    0 & 0 & 0 & 1 \\
    0 & 1 & 0 & 0 \\
    0 & 0 & -1 & 0 \\
    -1 & 0 & 0 & 0
\end{bmatrix}. 
\end{equation*} 
We clearly have that $\begin{bsmallmatrix}
    K_1 \\ B_1
\end{bsmallmatrix}$ is invertible. Thus, by Corollary \ref{corollary1}, with this choice of input and output, the system is well-posed. 

\subsection{Beam with distributed elastic support} 
We consider an Euler-Bernoulli beam where its end are supported transversely by distributed elastic springs where $k_{t}$ denotes the translational stiffness and $k_{r}$ denotes the rotational stiffness of the spring, see \cite{mathews1958vibrations, ElasticBeam:19}. The equation of motion for this system is given by  
\begin{equation}\label{EB:elastic}
    \rho(\zeta) \frac{\partial^2 \omega}{\partial t^2}(\zeta,t) + \frac{\partial^2}{\partial \zeta^2}\left( EI(\zeta) \frac{\partial^2 \omega}{\partial \zeta^2}(\zeta,t)\right) + k \omega (\zeta,t) =0, \quad t>0, \quad \zeta \in (0,1).
\end{equation}
The Euler-Bernoulli beam equation \eqref{EB:elastic} may be written in the formulation of the class of system \eqref{eqn:system1}
\begin{equation*}
\resizebox{\textwidth}{!}{$
\frac{\partial}{\partial t} 
    \begin{bmatrix}
    \rho \frac{\partial \omega}{\partial t} \\ 
    \frac{\partial^2 \omega}{\partial \zeta^2} \\
    \omega
\end{bmatrix} =
\begin{bmatrix}
    0 & -1 & 0 \\
    1 & 0 & 0 \\
    0 & 0 & i 
\end{bmatrix} \frac{\partial^2}{\partial \zeta^2} 
\begin{bmatrix}
    \frac{1}{\rho} & 0 & 0 \\
    0 & EI & 0 \\
    0 & 0 & k
\end{bmatrix}
\begin{bmatrix}
    \rho \frac{\partial \omega}{\partial t} \\ 
    \frac{\partial^2 \omega}{\partial \zeta^2} \\ 
    \omega
\end{bmatrix} +
\begin{bmatrix}
    0 & 0 & -1 \\
    0 & 0 & 0 \\ 
    1 & \frac{-i k}{EI} & 0
\end{bmatrix} \begin{bmatrix}
     \frac{1}{\rho} & 0 & 0 \\
    0 & EI & 0 \\
    0 & 0 & k
\end{bmatrix} \begin{bmatrix}
    \rho \frac{\partial \omega}{\partial t} \\ 
    \frac{\partial^2 \omega}{\partial \zeta^2} \\ 
    \omega
\end{bmatrix}. 
$}
\end{equation*} 
We choose the following boundary conditions
\begin{align*}
    EI(0) \frac{\partial^2 \omega}{\partial \zeta^2}(0,t) + k_{r} \frac{\partial \omega}{\partial \zeta}(0,t) &= 0, \\
    \frac{\partial^2 \omega}{\partial t \partial \zeta} (0,t) &= 0, \\
    \left[ \frac{\partial}{\partial \zeta}\left(EI(\zeta) \frac{\partial^2 \omega}{\partial \zeta^2}(\zeta,t) \right) \right]_{\zeta=0} + k_{t} w(0,t) &= 0, \\
    \frac{\partial^2 \omega}{\partial t \partial \zeta}(1,t) &= 0, \\
     EI(1) \frac{\partial^2 \omega}{\partial \zeta^2}(1,t) + k_{r} \frac{\partial \omega}{\partial \zeta}(1,t) &= 0, \\
     \left[ \frac{\partial}{\partial \zeta}\left(EI(\zeta) \frac{\partial^2 \omega}{\partial \zeta^2}(\zeta,t) \right) \right]_{\zeta=1} + k_{t} \omega(1,t) &= u(t) .
\end{align*}
We introduce the output 
\begin{equation*}
 y(t) = \frac{\partial \omega}{\partial t}(1,t) .
\end{equation*}
We have that 
\begin{equation*}
\begin{bmatrix}
    K_1 \\ B_1 
\end{bmatrix} = \begin{bmatrix}
    0 & 0 & 0 & 0 & 0 & -i k_{r} \\
    0 & 0 & 0 & 0 & 1 & 0 \\
    0 & 0 & 0 & 1 & 0 & 0 \\
    0 & 1 & 0 & 0 & 0 & 0 \\
    0 & 0 & -i k_{r} & 0 & 0 & 0 \\
    1 & 0 & 0 & 0 & 0 & 0
\end{bmatrix}.
\end{equation*}
Since $\begin{bsmallmatrix}
    K_{1} \\
    B_{1}
\end{bsmallmatrix}$ is invertible, by Corollary \ref{corollary1}, the system is well-posed. 

\section{Conclusion and perspectives}\label{section6}
In this work, we have investigated the well-posedness of a class of boundary control and observation system in the form \eqref{eqn:system1} which in particular include the Euler-Bernoulli beam models. To this end, we have provided a necessary and sufficient condition for the well-posedness. As a consequence, we have showed that, under full control and observation, well-posedness implies exact controllability. A perspective could be to extend the proposed approach to a more general class that include the Rayleigh beam equation.

\bibliographystyle{siamplain}
\bibliography{refs}

\end{document}